\theoremstyle{plain}% default
\newtheorem{theorem}{Theorem}[section]
\newtheorem{corollary}{Corollary}[section]
\newtheorem{proposition}{Proposition}[section]
\newtheorem{lemma}{Lemma}[section]
\theoremstyle{definition}
\theoremstyle{remark}
\newtheorem{remark}{Remark}[section]
\newcommand{\supp}{\mathop{\rm supp}}
\newcommand{\Arg}{{\rm Arg}}
\newcommand{\field}[1]{\mathbb{#1}}
\newcommand{\R}{\field{R}}
\newcommand{\N}{\field{N}}
\newcommand{\C}{\field{C}}
\renewcommand{\P}{\field{P}}
\renewcommand{\Re}{\mathop{\rm Re}}
\renewcommand{\Im}{\mathop{\rm Im}}
\newcommand{\dsty}{\displaystyle}
\def\XXint#1#2#3{{\setbox0=\hbox{$#1{#2#3}{\int}$}
\vcenter{\hbox{$#2#3$}}\kern-.5\wd0}}
\title{Equilibrium measures in the presence of weak rational external fields}
\author{Ram\'{o}n Orive and Joaqu\'{\i}n F. S\'{a}nchez--Lara}
\date{\today}
\begin{document}

\vspace{1cm} \maketitle

\begin{abstract}

In this paper equilibrium measures in the presence of external fields created by fixed charges are analyzed. These external fields are a particular case of the so-called rational external fields (in the sense that their derivatives are rational functions). Along with some general results, a thorough analysis of the particular case of two fixed negative charges (``attractors') is presented. As for the main tools used, this paper is a natural continuation of \cite{MOR2015}, where polynomial external fields were thoroughly studied, and \cite{OrSL2015}, where rational external fields with a polynomial part were considered. However, the absence of the polynomial part in the external fields analyzed in the current paper adds a considerable  difficulty to solve the problem and justifies its separated treatment; moreover, it is noteworthy to point out the simplicity and beauty of the results obtained.

\end{abstract}

\section{Introduction}
This paper is devoted to the study of equilibrium measures in the real axis in the presence of rational external fields created by fixed charges. These are external fields of the form:
\begin{equation}\label{fixedfield}
\varphi (x) = \sum_{j=1}^q\,\gamma_j\,\log\,|x-z_j|\,,\;\gamma_j\in \R\,,\;z_j\in \C \,,
\end{equation}
where for $\gamma_k>0\,,$ $z_k$ must lie on $\C \setminus \R\,,$ and it is assumed that $\dsty \sum_{j=1}^q\,\gamma_j\,=\,T\,>\,0\,.$ These conditions ensure that given any $t\in (0,T)$, there exists a measure $\lambda_t = \lambda_{t,\varphi}$, such that $\lambda_t (\R) = t$, with compact support $S_t \subset \R\,,$ uniquely determined by the equilibrium condition (see e.g. \cite{Saff:97})
\begin{equation}\label{equilibrium}
V^{\lambda_t}(x) + \varphi (x)\,\begin{cases} = c_t\,,\;& x\in S_t\,, \\ \geq c_t\,,\;& x\in \R\,, \end{cases}
\end{equation}
where $c_t$ is called the equilibrium constant and for a measure $\sigma\,,$ $\dsty V^{\sigma}(x) = - \int \,\log |x-s|\,d\sigma(s)\,.$ The measure $\lambda_t$ is called the equilibrium measure in the presence of $\varphi$ and minimizes the weighted energy $$I_{\varphi}(\sigma) = -\iint \log |x-z|\,d\sigma(x)\,d\sigma(z) + 2\int\varphi (x) d\sigma (x)$$ among all measures $\sigma$ supported in the real axis and such that $\sigma (\R) = t\,.$

External fields \eqref{fixedfield} are called rational since their derivatives are rational functions:
\begin{equation*}\label{derivrational}
\varphi' (x) = \sum_{j=1}^q\,\gamma_j\,\frac{x-\Re z_j }{(x-z_j)(x-\overline{z}_j)}\,,\;x\in \R.
\end{equation*}
In this sense, this paper completes the analysis started in \cite{OrSL2015}, where rational external fields of the form
\begin{equation}\label{generalrat}
\varphi(x) = P(x) + \sum_{j=1}^q\,\gamma_j\,\log\,|x-z_j|\,,\;\gamma_j\in \R\,,\;z_j\in \C \,,
\end{equation}
$P$ being a polynomial of even degree $2p$, with $p\geq 1$, were considered. There, a particular case was treated with detail: a generalized Gauss-Penner model for which $p=q=2$. When $p\geq 1$, the polynomial part makes the external field strong enough to be admissible for any $t\in (0,+\infty)$. Conversely, when the polynomial part is absent, the external field is weaker and it is admissible just for $t\in (0,T)$. This important difference is one of the reasons for studying these weaker rational external field in a separated paper.

Of course, it is also possible to deal with rational external fields \eqref{fixedfield} with some $z_j\in \R$ and the corresponding $\gamma_j > 0$; but in that case, the conductor where the equilibrium problem is posed cannot be the real axis. This is the situation, for instance, when the asymptotics of Jacobi polynomials with varying non--classical parameters are handled (see e.g. \cite{MFMGOr2000}, \cite{KuMF2004} and \cite{MFOr2005}). In this situation, the support of the equilibrium measure consists of a finite union of arcs and curves in the complex plane; these arcs/curves satisfy a kind of symmetry with respect to the external field (the so-called ``S-symmetry'' introduced by H. Stahl during the eighties, see \cite{Stahl:86}). In a similar fashion, when asymptotics of Laguerre polynomials with varying non--classical parameters are studied, a rational external field with a polynomial part \eqref{generalrat} takes place (see \cite{MFMGOr2001}, \cite{KuML2001}, \cite{KuML2004}, \cite{DMOr2011}, \cite{AMMT2014} and, from the viewpoint of the Gauss-Penner Random Matrix models, \cite{AlMAMR2015}). Other setting where it is feasible having $z_j\in \R$ and $\gamma_j > 0$ is when the conductor is a proper subset of the real axis not containing points $z_j$; in that case, hard edges at the endpoints of the conductor arise (see e.g. \cite{OrSL2015}, where the conductor $[0,\infty)$ is considered).

Rational external fields appear in a number of applications in approximation theory, for instance when dealing with the asymptotic distribution of zeros of Orthogonal or Heine-Stieltjes polynomials; in particular, the application of ``purely rational'' external fields of the form \eqref{fixedfield} to the asymptotics of Heine-Stieltjes polynomials will be recalled below with more detail. But there are also important applications in random matrix theory, for example in the study of Gauss-Penner type models. The rest of this section will be devoted to describing briefly these applications.

In the second section, equilibrium problems in the presence of external fields \eqref{fixedfield} are handled in general, studying some properties like the asymptotic behavior of the equilibrium measure when $t$ (the size of the equilibrium measure or, equivalently in other contexts, the ``time'' or the ``temperature'', \cite{BlEy2003}-\cite{Bleher/Its03}, \cite{KuML 00} and \cite{MOR2015}, among others) tends to $T$, as well as the evolution of this equilibrium measure when other parameters (the ``heights'', $\Im z_j$, or the ``masses'', $\gamma_j$) of the external field vary.

Finally, the particular case of a rational external field created by two fixed charges will be treated with detail in Section 3. In this case, the support of the equilibrium measure may consist of one or two intervals (``one-cut'' or ``two-cut'', respectively), and we are mainly interested in the evolution of this support when $t$ travels through the interval $(0,T)$. Our main result is Theorem 3.1 below, though other results necessary for its proof, presented in Sections 2 and 3, are also of interest themselves; these proofs are collected in Section 4 in order to make the paper more readable. Finally, the geometrical aspects of the solution of our main problem presented in Theorem 3.1 below are illustrated in the final appendix.

It is noteworthy to recall that, regarding the methodology used, this paper is a natural continuation of \cite{MOR2015}, where this ``dynamical'' approach was thoroughly carried out for the case of polynomial external fields.

\subsection{Generalized Lam\'{e} equations and Heine-Stieltjes Polynomials}

In a series of seminal papers (see \cite{St 1885}-\cite{St 1885d}), T. J. Stieltjes (1856-1894) provided an elegant model for the electrostatic interpretation of the zeros of classical families of orthogonal polynomials (Jacobi, Laguerre and Hermite) and polynomial solutions of certain linear differential equations (the so-called Heine-Stieltjes polynomials). Regarding the latter case, he considered the following scenario: We are given $a_k \in \R,k=0,\ldots,p\,,$ respect., with $a_0<a_1<\ldots<a_{p-1}<a_p$, and $p+1$ positive charges $\rho_k,k=0,\ldots,p\,,$ placed at $a_k,k=0,\ldots,p\,,$ respect. Then, suppose we have $n$ unit positive charges which can move freely through the real interval $[a_0,a_p]\,,$ and assume that a mutual interaction according the logarithmic potential takes place. Then, if the free charges are placed at points $x_1,\ldots,x_n\,\in \,[a_0,a_p]\,,$ the (discrete) energy of the system is given by:
\begin{equation}\label{discrete energy}
E(x_1,\ldots,x_n)\,=\,-\sum_{i<j}\ln|x_i-x_j|\,-\sum_{k=0}^p
\rho_k \sum_{i=1}^{n}\ln|a_k-x_i|\,.
\end{equation}
Then, following the same approach used for Jacobi polynomials, he
showed that for the equilibrium positions, $(x_1^*,\ldots,x_n^*)$, which minimize the energy \eqref{discrete energy},
the associated monic polynomials $\displaystyle
y(x)\,=\,\prod_{j=1}^n(x-x_j^*)$ (Heine-Stieltjes polynomials) are
solutions of the Lam\'{e} differential equation
\begin{equation}\label{Lame classic}
A(x)y''+B(x)y'+C(x)y=0 \,,
\end{equation}
\noindent where $\displaystyle A(x)=\prod_{k=0}^{p}(x-a_k)\,\in
\mathbb{P}_{p+1}\,$ and $\displaystyle B\in \mathbb{P}_{p}$ such that
$\frac{B(x)}{A(x)}\,=\,\sum_{k=0}^p\frac{\rho_k}{x-a_k}\,,$ for
some polynomial $\displaystyle C \in \mathbb{P}_{p-1}$ (Van Vleck
polynomial). Heine \cite{He 1878} and Stieltjes \cite{St 1885}
showed that there exist $\displaystyle \binom{n+p-1}{p-1}$
polynomials $C \in \mathbb{P}_{p-1}$ for which the Lam\'{e}
differential equation \eqref{Lame classic} has a unique polynomial solution
$\displaystyle y=y_n\,$ of exact degree $n$, with $n$ simple zeros
located in $\displaystyle (a_0,a_p)$ (see also \cite{Sz 75} for a more complete and comprehensive proof). It is easy to see that \eqref{Lame classic} recovers
the Jacobi case for $p=1$. For $p=2$, we have the so-called Heun
differential equation (see e.g. \cite{Ron 95}). In addition, it is noteworthy to mention that the zeros of Heine-Stieltjes polynomials are actually a particular case of the so-called \emph{weighted Fekete points} (see e.g. \cite{Saff:97}).
This electrostatic model was used in \cite{MFSa2002} to obtain
the asymptotics of Heine-Stieltjes polynomials when $n$, that is,
the number of free charges, increases to infinity.

During the last
century, several extensions of the model above have taken place
(see \cite{MaMFMG 07} and references therein), but in most cases
the positivity of the residues $\rho_k$ has not been dropped or, what is the
same, the presence of attractive fixed charges has not been allowed. However, in this sense it is necessary to point out some results by A.
Grunbaum \cite{Grum 98}-\cite{Grum 01}  and Dimitrov and Van
Assche \cite{DiVA 00}.

In a more recent paper \cite{OrGa2010}, the following equilibrium problem was considered (for a counterpart of this problem in the Unit Circle, see \cite{Gr 02} and \cite{MFMGOr 05}). Let $m,\,n\in
\mathbb{N}\,$ and consider $m$ prescribed negative charges
$-\omega_{k}<0\,,\,k=1,\ldots,m\,,$ placed, respectively, at points
$z_{k}\in \mathbb{C}\setminus \mathbb{R}\,,\,k=1,\ldots,m$. Thus,
if we denote by $x_{k}\in \mathbb{R}\,,\,k=1,\ldots,n,$ the
positions of $n$ free positive unit charges, the (logarithmic)
energy of the system is given by:
\begin{equation}\label{ilustenergy}
E(x_{1},\ldots,x_{n})=-\sum_{1\leq j<k\leq
n}\log|x_{k}-x_{j}|+\sum_{j=1}^{n} \sum_{k=1}^{m}
\omega_{k}\log|z_{k}-x_{j}|\,.
\end{equation}

Hereafter, let us denote by $\displaystyle s\,=\,\sum_{k=1}^{m}\omega_{k}\,,$ the total mass of the prescribed charges. Then, it was shown (see \cite[Theorem 1]{OrGa2010}) that if $s > n-1\,,$ the energy functional \eqref{ilustenergy} has a
global minimum in $\mathbb{R}^n\,.$ This minimum is attained at a
point $\displaystyle (x_{1}^*,\ldots,x_{n}^*)\in \mathbb{R}^{n}\,,$ where
$-\infty<x_{1}^*< \ldots <x_{n}^*<+\infty\,$. However, despite the classical Heine-Stieltjes setting, the global minimum does not need to be unique (see \cite[Section 2.2]{OrGa2010}). In addition, it was shown that each minimizer (Heine-Stieltjes polynomial), $\displaystyle y(x)\,=\,\prod_{j=1}^n(x-x_j^*)$, is solution of a generalized Lam\'{e} differential equation of the form \eqref{Lame classic}, where
$$A(x) = \,\prod_{k=1}^m(x-z_k)(x-\overline{z}_k)\,,\;h(x)=\prod_{k=1}^{m}\left((x-z_{k})(x-\overline{z}_{k})\right)^{\omega_{k}}\,,\,B(x) = -A(x)\,\frac{h'(x)}{h(x)}\,,$$
for some Van Vleck polynomial $C\in \P_{2m-2}\,.$

In \cite{OrGa2010} it was also considered the asymptotics of Heine-Stieltjes polynomials when both $n$ and $s = s(n)$ tend to $\infty$, in such a way that $\displaystyle \lim_{n\rightarrow\infty}\,\frac{s}{n} = \theta > 1\,.$ In the framework of differential operators, asymptotics when $n$, the degree of the Heine-Stieltjes polynomials, tend to $\infty$ are often known as ``semiclassical'' and the corresponding when $s$ tend to $\infty$ are called ``thermodynamical''; thus, in this case, there is a combination of both types of asymptotics (which is also known in random matrix models as a ``double scaling''). In this sense, for each $n$ denote $\displaystyle \nu_n \,=\,\frac{1}{n}\sum_{k=1}^{m(n)}\omega_{nk}\delta_{z_{nk}}$, which is an atomic measure such that $\nu_n (\R) = \frac{s(n)}{n}\,> \frac{n-1}{n}\,,$ and suppose that
$$
\nu_n \stackrel{*}{\longrightarrow} \nu \,,\, n\in \Lambda \subset
\mathbb{N}\,\,\text{and}\,\,n\rightarrow \infty \,,
$$
in the weak-* topology, for some measure $\nu$ of size $\displaystyle \theta = \lim_{n\rightarrow \infty}\,\frac{s}{n}\,> 1$, with
compact support in $\mathbb{C}\setminus \mathbb{R}$ and some
infinite subsequence $\Lambda \subset \mathbb{N}$. Now, suppose
that, for each $n\in \mathbb{N}\,,$ $\{x_{nj}^*:j=1,\ldots,n\}$ is
an equilibrium configuration (that is, a global minimum) for the discrete equilibrium problem
\eqref{ilustenergy}. Then, denoting by $\displaystyle
\mu_n\,=\,\frac{1}{n}\sum_{j=1}^n \delta_{x_{nj}^*}$, in \cite[Theorem 3]{OrGa2010} it was proved (following the same approach as in \cite[Th.
2]{MFSa2002} and \cite[Th. 3.2]{MFMGOr 05}) that $\displaystyle \mu_n \stackrel{*}{\longrightarrow} \mu\,,\, n\in
\Lambda \subset \mathbb{N}\,\,\text{and}\,\,n\rightarrow \infty$,
where $\mu$ is the equilibirum measure of $\mathbb{R}\,$ in the
external field $\varphi =-V(\nu,\cdot)$. That is, the unit
counting measures of zeros of Heine-Stieltjes polynomials
converge, in the weak-* star topology, to the equilibrium measure
in the external field due to the potential of the
negative charge $\nu \,$.

In the general case not much more can be said, but the situation is different if the limit measure $\nu$ is atomic. In \cite{OrGa2010} the authors dealt with this case; in particular, it was considered the case $\nu = \gamma_1 \delta_{z_1} + \gamma_2 \delta_{z_2}\,,$ with $z_1, z_2 \in \C \setminus \R\,$ and $\gamma_1 + \gamma_2 >1\,$. In this sense, the so-called ``totally symmetric'' case, i.e. when $z_2 = -\overline{z}_1$ and $\gamma_2 = \gamma_1\,,$ was studied in detail. One of the main goals of this paper is dealing with the general situation where the ``heights'', $\Im z_1, \Im z_2\,,$ and the ``charges'', $\gamma_1, \gamma_2\,,$ are arbitrary positive real numbers.

Finally, let us point out that it is also possible to consider sequences of critical configurations (relative extrema or saddle points), not necessarily global minima, of the discrete energy \eqref{ilustenergy} for $n\rightarrow \infty$ and $s/n \rightarrow \lambda >1$. The limit measures of such sequences will be the so-called (continuous) critical measures, a class of measures to which the equilibrium measure belongs. In Section 2, a little bit more will be said about these critical measures (see \cite{MFRa2011} for an extensive study of them).

\subsection{Applications of rational external fields to random matrix models}

It is well known that another important circle of applications of equilibrium problems in the presence of external fields deals with the Random Matrix models (see e.g. \cite{MOR2015} and the exhaustive bibliography therein).

This is an important theory within the mathematical physics and, more precisely, the statistical mechanics.

Specifically, let us consider the set of $N\times N$ Hermitian matrices
$$
\left\{ M=\left( M_{jk}\right)_{j,k=1}^N:\; M_{kj}=\overline{M_{jk}} \right\}
$$
as endowed with the  joint probability distribution
$$
d\nu_N(M)=\frac{1}{\widetilde Z_N}\, \exp\left( -  \mathrm{Tr} \, V(M)\right)\, dM, \qquad
dM=\prod_{j=1}^N dM_{jj}\prod_{j\not=k}^N d\Re M_{jk}d\Im M_{jk},
$$
where $V : \, \R \to \R$ is a given function such that
the integral in the definition of the normalizing constant
$$
\widetilde Z_N = \int \exp\left( -  \mathrm{Tr} \, V(M)\right)\, dM.
$$
converges. Then, it is well-known (see e.g. \cite{Mehta2004}) that $\nu_N$ induces a joint probability distribution $\mu_N$ on the  eigenvalues $\lambda_1<\dots <\lambda_N$ of these matrices, with the density
\begin{equation*}
\label{Eigdensity}
\mu_N' (\bm \lambda)  = \frac{1}{ Z_N}\, \prod_{i<j} (\lambda_i-\lambda_j)^2 \exp\left( -  \sum_{i=1}^N V(\lambda_i)\right) ,
\end{equation*}
where $\bm \lambda = (\lambda_1,\dots ,\lambda_N)$, and with the corresponding \emph{partition function}
$$
Z_N=\int_\R\dots\int_\R \, \prod_{i<j} (\lambda_i-\lambda_j)^2 \exp\left( -  \sum_{i=1}^N V(\lambda_i)\right) \, d\lambda_1\dots d\lambda_N.
$$
In this sense, the \emph{free energy} of this matrix model is defined as
\begin{equation*}
F_N=-\frac{1}{N^2}\log Z_N.
\end{equation*}
In the physical context it is very important to study the (thermodynamical) limit
\begin{equation*}
F_\infty=\lim_{N\to\infty}F_N
\end{equation*}
(the so-called infinite volume free energy). The existence of this limit
has been established  under very general conditions on $V$, see e.g.~\cite{Johansson}.

The fact that
$$
d\mu_N (\bm \lambda)  = \frac{1}{ Z_N}\,e^{-2H_N(\bm \lambda)}\,d\bm \lambda\,,
$$
with
\begin{equation*}
\begin{split}
H_N(\bm \lambda) & = \sum_{i< j} \log\frac{1}{|\lambda_i-\lambda_j|} +  \sum_{j=1}^N \frac{V(\lambda_j)}{2} \\
& = \frac{N^2}{2}\,\left[-\int\int_{x\neq y}\,\log |x-y|\,d\nu_N(x) d\nu_N(y) + \int V(x)\,d\nu_N(x)\right] \\
& = \frac{N^2}{2}\,I_{\varphi}(\nu_N)\,,\;\;\varphi = \frac{V}{2}\,,
\end{split}
\end{equation*}
means that the value of $F_\infty$ is related to the solution of a minimization problem for the weighted logarithmic energy. Therefore, the corresponding minimizer is the \emph{equilibrium measure} associated to the external field $\varphi = \frac{V}{2}$.

It has been particularly studied the case of polynomial potentials $V$, and more precisely, the situation when $V$ is a quartic polynomial (see e.g. the recent monograph by Wang \cite{Wangbook} or the papers \cite{AMM2010}, \cite{BlEy2003}, \cite{Bleher99}, \cite{BPS95}, \cite{KuML 00} and \cite{MOR2015}, among many others), paying special attention to the phase transitions. In \cite{OrSL2015}, general rational external fields of type \eqref{generalrat} are handled in connection with the generalized Gauss-Penner model considered in \cite{Kris2006}: a $1$-matrix model whose action is given by $\dsty V(M) = tr\,\left(M^4-\,\log (v+M^2)\right)\,,$ in order to get a computable toy-model for the gluon correlations in a baryon background. The dimensionless parameter $v>0$ stands for the ratio of quark mass to coupling constant, and the logarithmic term (responsible of the rational nature of the external field) encapsulates the effect of the $N$-quark baryon. On the other hand, the ``purely rational'' external fields considered in the current paper are connected with the so-called ``multi-Penner'' matrix model, with action given by $\displaystyle W(M) = \sum_{j=1}^N\,\mu_j\,\log (M-q_i)\,,$ which is of interest in Gauge Theory, as well as in Toda systems (see e.g. \cite{Eguchi} and \cite{Todastrings}).

\section{Rational external fields}
The subject of the present paper is the study of equilibrium measures in the presence of external fields of the form \eqref{fixedfield}, with $\gamma_j >0$ and $z_j \in \C \setminus \R$ for $j=1,\ldots,q$, where $$\sum_{j=1}^q\,\gamma_j\,=\,T\,>\,0\,.$$ In this sense, and also regarding the methodology used, it is a continuation of \cite{OrSL2015}, where general external fields containing a polynomial part of the form \eqref{generalrat} were considered. However, the absence of the polynomial part in the present external field represents a significant increase in the difficulty to solve the problem.

In \cite{MOR2015} and \cite{OrSL2015} it was shown how a combined use of two main ingredients provide a full description of the evolution of the support of the equilibrium measure when the size of the measure, $t$, grows from $0$ to $\infty$. These main tools are an algebraic equation for the Cauchy transform of the equilibrium measure and a dynamical system for the zeros of the density function of this measure, based on the Buyarov-Rakhmanov seminal result in \cite{Buyarov/Rakhmanov:99}.

Indeed, suppose that the rational external field is of the form \eqref{fixedfield}. Then, with respect to the first ingredient, as it was seen in \cite[Theorem 2.1]{OrSL2015} within a more general context, we have that the Cauchy transform of the equilibrium measure, $\dsty \widehat{\lambda_t}(x) = \int \, \frac{d\lambda_t (s)}{x-s}\,,$ satisfies the relation:
\begin{equation}\label{Cauchytr}
(-\widehat{\lambda_t} + \varphi')\,(z)\,=\,(T-t)\,\sqrt{R(z)}\,=\,(T-t)\,\frac{B(z)\,\sqrt{A(z)}}{D(z)}\,,\;z\in \C \setminus S_t\,,
\end{equation}
for some monic polynomials $\dsty A(z) = \prod_{j=1}^{2k}\,(z-a_j)$ and $B(z) = \prod_{j=1}^{2q-k-1}\,(z-b_j)$, with $a_1 < \ldots < a_{2k} \in \R\,,$ (and thus, $R$ is a rational function). Here and on the sequel, we denote $\dsty D(z) = \prod_{j=1}^q\,(z-z_j)(z-\overline{z}_j)\,,$ which is a polynomial of degree $2q$. In fact, \eqref{Cauchytr}, as well as the previous results in \cite[Theorem 2.2]{MOR2015} and \cite[Theorem 1.1]{OrSL2015}, hold in the more general context of critical measures.

In addition, \eqref{Cauchytr} provides an expression for the density of the equilibrium measure:
\begin{equation}\label{density}
\lambda'_t (x) = \,\frac{T-t}{\pi i}\,\sqrt{R(x)^+}\,=\,\frac{T-t}{\pi }\,\sqrt{|R(x)|}\;x\in \cup_{j=1}^k\,[a_{2j-1},a_{2j}]\,.
\end{equation}
Therefore, the zeros of $R$ are the main parameters of the equilibrium problem; the ones with odd multiplicity determine the support $S_t$. Equating the residues of both members in \eqref{Cauchytr} at $z_j\,,j=1,\ldots,q$, we obtain (taking real and imaginary parts) $2q$ nonlinear equations for the $2q+k-1$ zeros of $R$. However, only when the one-cut case occurs, that is, $k=1$, this system completely determines the zeros. When $k>1$, extra conditions are needed; indeed, in order to fulfil condition \eqref{equilibrium}, we have that
\begin{equation}\label{extraconds}
\int_{a_{2j}}^{a_{2j+1}} \,\sqrt{R(x)}\,dx = 0\,,\;j=1,\ldots,k-1\,,
\end{equation}
which means that $B$ must have an odd number of zeros (counting their multiplicities) on each gap $(a_{2j},a_{2j+1})\,,\;j=1,\ldots,k-1\,,$ of the support. Observe that it provides a bound for $k$, the number of intervals comprising the support $S_t$: namely, it holds $k\leq q$.

 Recall that for rational external fields of the form \eqref{generalrat} with $p\geq 1$, the residue at infinity always provide some simple equations helping us to find the value of the endpoints and the other zeros of the density function in a rather easy way, at least in the one-cut situation (see \cite{OrSL2015}); unfortunately, this does not work in our ``purely rational'' situation ($P\equiv 0$). Thus, in the current setting even the one-cut case is difficult to be explicitly solved, as well as finding the values of parameters where phase transitions occur. However, despite the complicated calculations needed for solving explicitly the problem of determining the support, it is worth to point out the simplicity and beauty of the results (see Theorem 3.1 below).	

The second ingredient is based on a  ``dynamical'' description of the support of the equilibrium measure $\lambda_t$ in the real axis in the presence of an external field, which was proposed by Buyarov and Rakh\-ma\-nov in  \cite{Buyarov/Rakhmanov:99}. This seminal result basically asserts that at a certain ``instant'' $t_0$, the derivative of the equilibrium measure $\lambda_t$ with respect to $t$ is given by the Robin measure (that is, the equilibrium measure in the absence of an external field) for the support $S_{t_0}$. In the current rational case, taking into account these results and the well-known expression for the Robin measure of a finite union of compact intervals, we have that except for an at most denumerable set of values of $t$,
\begin{equation}\label{BuyRakhrat}
\frac{\partial}{\partial t}\,\left((T-t)\,\frac{B(z)\,\sqrt{A(z)}}{D(z)}\right)\,=\,-\,\frac{F(z)}{\sqrt{A(z)}}\,,
\end{equation}
where $F$ is a monic polynomial of degree $k-1$ such that $\dsty \int_{a_{2j}}^{a_{2j+1}}\,\frac{F(x)}{\sqrt{A(x)}}\,dx\,=\,0\,,\;j=1,\ldots,k-1\,,$ which means that $F$ has a simple zero on each gap $(a_{2j},a_{2j+1})$ of the support (see \cite{OrSL2015}). As in \cite{MOR2015}, we will make use of the abbreviate physical notation for the derivative with respect to the ``time'' $t$: $\dsty \dot{f} = \frac{\partial f}{\partial t}\,.$ Thus, using \eqref{BuyRakhrat}, one immediately obtains (see \cite[Theorem 1.2]{OrSL2015}):

\begin{theorem}\label{dynamicalsystem}
Except for a denumerable set of values of $t$, it holds
\begin{equation}\label{variationzeros}
\begin{split}
\dot{a_i} & = \,\frac{1}{T-t}\,\frac{2D(a_i)F(a_i)}{B(a_i)\,\prod_{j\neq i}(a_i-a_j)}\,,\,i=1,\ldots,2k \,,\\
\dot{b_i} & = \,\frac{1}{T-t}\,\frac{D(b_i)F(b_i)}{A(b_i)\,\prod_{j\neq i}(b_i-b_j)}\,,\,i=1,\ldots,2q-k-1\,,
\end{split}
\end{equation}
with $1\leq k\leq q\,.$
\end{theorem}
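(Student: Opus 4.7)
The plan is to treat \eqref{variationzeros} as a purely algebraic consequence of the Buyarov--Rakhmanov identity \eqref{BuyRakhrat}. The idea is to expand the $t$-derivative of the left-hand side of \eqref{BuyRakhrat} with the product rule, viewing the zeros $a_i(t)$ and $b_i(t)$ as the only quantities that carry $t$-dependence (since the polynomial $D$ is fixed by the external field), and then evaluate the resulting identity at $z=a_i$ and at $z=b_i$, where cancellations isolate each $\dot{a_i}$ and $\dot{b_i}$.

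Concretely, from $A(z)=\prod_i(z-a_i)$ and $B(z)=\prod_i(z-b_i)$ the chain rule gives
\begin{equation*}
\frac{\partial A}{\partial t}(z) = -\sum_{i=1}^{2k}\dot{a_i}\prod_{j\neq i}(z-a_j),\qquad \frac{\partial B}{\partial t}(z) = -\sum_{i=1}^{2q-k-1}\dot{b_i}\prod_{j\neq i}(z-b_j).
\end{equation*}
Differentiating $(T-t)B(z)\sqrt{A(z)}/D(z)$ in $t$, multiplying by $\sqrt{A(z)}$ to clear the square roots, substituting \eqref{BuyRakhrat}, and finally multiplying by $D(z)$, yields the polynomial identity
\begin{equation*}
-F(z)\,D(z) \,=\, -A(z)B(z) \,+\, (T-t)\left[A(z)\,\frac{\partial B}{\partial t}(z) \,+\, \frac{1}{2}\,B(z)\,\frac{\partial A}{\partial t}(z)\right].
\end{equation*}

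Evaluating at $z=a_i$ annihilates every term with an $A(z)$ factor; in the remaining sum for $\partial_t A(a_i)$ only the $i$-th summand survives, so
\begin{equation*}
-F(a_i)\,D(a_i) \,=\, -\frac{1}{2}(T-t)\,B(a_i)\,\dot{a_i}\prod_{j\neq i}(a_i-a_j),
\end{equation*}
which rearranges to the first formula in \eqref{variationzeros}. Evaluating the same identity at $z=b_i$ kills the terms with a $B(z)$ factor, and the analogous simplification of $\partial_t B(b_i)$ delivers the second formula.

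The main obstacle will be justifying the smoothness of the maps $t\mapsto a_i(t),\,b_i(t)$ that underlies the product-rule computation. Off a discrete (hence at most denumerable) set of $t$-values, all zeros $a_i,b_i$ remain simple and distinct, no zero of $B$ collides with a zero of $A$ or with a fixed pole $z_j$, and the number $k$ of intervals in $S_t$ is locally constant; the implicit function theorem applied to the $2q$ residue equations at the $z_j$ (obtained by matching residues on both sides of \eqref{Cauchytr}), supplemented by the $k-1$ gap conditions \eqref{extraconds}, then yields real-analytic dependence of all parameters on $t$. The exceptional values correspond to the phase transitions at which an interval of $S_t$ is born, splits, or disappears; as in \cite{MOR2015} and \cite{OrSL2015} these form a discrete set, on whose complement \eqref{BuyRakhrat}, and hence \eqref{variationzeros}, is valid.
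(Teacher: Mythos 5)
Your proof is correct and follows the same route as the paper, which invokes \eqref{BuyRakhrat}, multiplies through by $\sqrt{A(z)}\,D(z)$ to get a polynomial identity, and evaluates at the zeros $a_i$ and $b_i$ of $A$ and $B$ to isolate $\dot{a_i}$ and $\dot{b_i}$ (the paper merely states this follows ``immediately'' and defers the calculation to \cite[Theorem~1.2]{OrSL2015}, but the derivation there is exactly the one you give). The final paragraph on real-analytic dependence away from a discrete set of $t$-values correctly accounts for the ``except for a denumerable set'' clause.
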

From \eqref{variationzeros}, it is clear that always $\dot{a_1}<0$ and $\dot{a_{2k}}>0\,.$ Moreover, taking into account that on each gap there is an even number of zeros of the product $B(x)F(x)$, it is also possible to assert that the $a_i$ when $i$ is odd are decreasing, while for even values of $i$ are increasing, which is coherent with the well-known fact that the support $S_t$ is increasing with $t$ (see \cite{Buyarov/Rakhmanov:99}).

Indeed, \eqref{variationzeros} is a dynamical system for the positions of all the important points determining the equilibrium measure and its support.
Previously, in \cite{MOR2015} a similar result was extensively used to study the dynamics of the equilibrium measure and its support, specially for the case of polynomial external fields; in particular, the so-called ``quartic'' case was analyzed in detail. Similarly, in \cite{OrSL2015} the case of a rational external field consisting of a polynomial part plus a logarithmic term (a generalized Gauss-Penner model) was studied.
\begin{remark}\label{lem:dynsystgral}
Bearing in mind the results in \cite{MOR2015}, for polynomial external fields, and those in \cite{OrSL2015} and \eqref{variationzeros}, for the rational case, it is easy to find a general structure of these dynamical systems. Indeed, for the zeros of the density \eqref{density} of the equilibrium measure of the interval $[c,d]\subset \R$ (bounded or not) in the presence of the external field \eqref{generalrat}, the following system of differential equations holds (except for a finite number of bifurcations/collisions),
\begin{equation}\label{dynsystgral}
\dot{\xi}_j = h_j(t)\,\frac{D(\xi_j)F(\xi_j)}{(AB)'(\xi_j)}\,,
\end{equation}
where $h_j(t)$ is a positive function of $t$ which reduces to a constant if the external field \eqref{generalrat} has a polynomial part, $D$ is real polynomial of even degree whose zeros are located at the point masses and their conjugates, and $A,B$ play the same role as in \eqref{Cauchytr}-\eqref{density}. At first sight, there seems to be an important difference between the purely rational case handled in the current paper and the rational cases with a polynomial part: in the present case, the system  is not an autonomous one, since function $h_j$ depends on the variable $t$; in the other rational cases, the presence of the polynomial part in the external field makes $h_j \equiv \kappa_j\,$ i.e constants independent of $t$. However, this is only an apparent difference: after a simple change of variable, the system easily becomes autonomous. Indeed, it is easy to check that the change $u = -\log (T-t) + \log T\,,$ with the new ``time'' $u$ lying on $(0,+\infty)$ transforms \eqref{variationzeros} in an autonomous system.

The shape of these dynamical systems \eqref{dynsystgral} resembles in a certain sense to a system of ODEs studied by Dubrovin in \cite{Dubrovin:1975fk} for the dynamics of the Korteweg-de Vries equation in the class of finite-zone or finite-band potentials, as it was pointed out in \cite{MOR2015}.

\end{remark}

As it was said above, under mild conditions on $\varphi$, the equilibrium measure depends analytically on $t$ except for a (possible) small set of values, which are called the critical points or the singularities of the problem. At this critical values of $t$, the so-called \emph{phase transitions} occur; in most of them, it implies a change in the number of cuts (connected components of the support $S_t$), but not always. The study of these phase transitions is one of the main issues of this problem. Let us recall, briefly, the basic type of singularities we can find (using the classification in \cite{MR2001g:42050}, also used then in \cite{KuML 00}) and \cite{MOR2015}). In this case, we prefer recalling the version of these definitions used in \cite{MOR2015}, namely:

\begin{itemize}
\item \textbf{Singularity of type I:} at a time $t=\tau$ a real zero $b$ of $B$ is such that $(V^{\lambda_{\tau}} + \varphi) (b)=c_{\tau}$, $b\notin S_{\tau}$ (see \eqref{equilibrium}), in such a way that for $t=\tau$, $b$ is a simple zero of $B$. Therefore, at this time $t=\tau$ a real zero $b$ of $B$ (a double zero of $R_{\tau}$) splits into two simple zeros $a_-<a_+$, and the interval $[a_-,a_+]$ becomes part of $S_t$ for $t>\tau$ (\emph{birth of a cut}).
 \item \textbf{Singularity of type II:} at a time $t=\tau$, a real zero $b$ of $B$ (of even multiplicity) belongs to the interior of the support $S_{\tau}$; according to \eqref{Cauchytr}-\eqref{density}, the density of $\lambda_{\tau}$ vanishes in the interior of its support, in such a way that for $t=\tau$, $b$ is a double zero of $B$. Thus, at this time $t=\tau$ two simple zeros $a_{2s}$ and $a_{2s+1}$ of $A$ (simple zeros of $R_t$, i.e., endpoints of the support) collide (\emph{fusion of two cuts}).
 \item \textbf{Singularity of type III:} at a time $t=\tau$, polynomials $A$ and $B$ have a common real zero $a$; the only additional assumption is that   $a$ is a double zero of $B$, so that $\lambda_{\tau}'(x)=\mathcal O(|x-a|^{5/2})$ as $x\to a$. Then, at this time $t=\tau$ a pair of complex conjugate zeros $b$ and $\overline b$ of $B$ (double zeros of $R_t$) collide with a simple zero $a$ of $A$ (endpoint), so that $\lambda_{\tau}'(x)=\mathcal O(|x-a|^{5/2})$ as $x\to a$. Observe that in this case no topological change takes place: the number of cuts does not vary.
 \end{itemize}

Finally, in \cite{MOR2015} and \cite{OrSL2015} it is also considered another special situation, which is not properly a singularity. It takes place when polynomial $B$ has two conjugate imaginary roots, $b$ and $\overline{b}$, which collide at a certain time and give birth a double real root of $B$ (quadruple real root of $R$) in the real axis, which immediately splits into a pair of double roots, $b_1$ and $b_2$, which tend to move away each other. In fact, what we have in this case is the birth of two new local extrema of the total (or ``chemical'') potential \eqref{equilibrium}. From this point of view, a type III singularity may be seen as a limit case of these situations, when the pair of imaginary zeros of $B$ collide simultaneously with a zero of $A$ (endpoint).

In the case where the number of cuts is bounded by $2$ (precisely, the case we will deal in Section 3 of this paper), these are just all the possible types of singularities, while where it can be greater than $2$ more intriguing phenomena can occur when two or more of these singularities take place simultaneously. 

Now, we are dealing with what may be called, in a colloquial style, ``the beginning and the end of the movie'', that is, the situation when $t\searrow 0$ and $t\nearrow T$. The answer to the first question is clear: since $\varphi'$ is a rational function whose numerator has degree $2N-1$, we easily conclude that the cardinality of the set $$\bigcap_{t>0}\,S_t = \{y\in \R\,:\,\varphi (y) = \min_{x\in \R} \varphi (x)\}$$ belongs to the set $\{1,\ldots,q\}\subset \N\,.$ Indeed, for $t=0$, the left-hand member \eqref {Cauchytr} reduces to $\varphi'$ and, thus, the support of the equilibrium measure starts at one or several of the critical points of the external field; in a similar fashion, these critical points of $\varphi$ are the initial conditions of the dynamical system \eqref{variationzeros}. Regarding the second question, we have,

\begin{theorem}\label{thm:end}

Denote by $\mu_t$ the equilibrium measure in the external field \eqref{fixedfield}. Then,
\begin{itemize}
\item[i)] There exists the limit of the equilibrium measure when $t\rightarrow T$:
$$\lim_{t\rightarrow T}\mu_{t}=\mu_{T}\,,$$
in the sense that $$\lim_{t\rightarrow T}\mu_t (I)=\mu_T (I)\,,$$
for any Borel set $I\subset\mathbb{R}$.

\item[ii)] For $t$ sufficiently close to $T$, the support $S_t$ consists of a single interval. In particular, using the $AB$-representation \eqref{Cauchytr}, the zeros of polynomial $A$ (endpoints) diverge:
    $$\lim_{t\rightarrow T} a_1=-\infty\,,\qquad \lim_{t\rightarrow T} a_2=+\infty\,,$$
    and the zeros of polynomial $B$ converge to the zeros of the rational function:
    $$\sum_{j=1}^q\left(\frac{\gamma_j}{2(z-z_j)}-\frac{\gamma_j}{2(z-\overline{z}_j)}\right)\,,$$
    which belong to $\mathbb{C}\setminus\mathbb{R}$.
\item[iii)] The density of the limit measure $\mu_T$ is given by
$$\frac{d\mu_T}{dx}=\,\frac{1}{T\,\pi}\,\sum_{j=1}^q\frac{\gamma_j\Im z_j}{((x-\Re z_j)^2+\Im z_j^2)}$$
\end{itemize}

\end{theorem}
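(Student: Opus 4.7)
The plan is to identify $\mu_T$ as the normalized balayage of $\sigma:=\sum_{j=1}^q\gamma_j\delta_{z_j}$ onto $\R$, and then read off the asymptotics of the support $S_t$ from the algebraic equation \eqref{Cauchytr} as $t\nearrow T$.

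For part (iii), I would first compute $\widehat{\sigma}:=\mathop{\rm Bal}(\sigma,\R)$. The balayage of each point mass $\delta_{z_j}$ (with $\Im z_j\ne 0$) onto $\R$ is the Cauchy probability measure with density $|\Im z_j|/[\pi((x-\Re z_j)^2+(\Im z_j)^2)]$, so summing with weights $\gamma_j$ shows that $\widehat{\sigma}$ has total mass $T$ and the density stated in (iii), scaled by $T$. A short Cauchy-transform computation (or a Poisson-integral argument) shows that $V^{\widehat{\sigma}}(x)=V^{\sigma}(x)=-\varphi(x)$ for every $x\in\R$, with no additive constant. Therefore the probability measure $\mu_T:=\widehat{\sigma}/T$ satisfies $V^{\mu_T}(x)+\varphi(x)/T\equiv 0$ on $\R$; it is the equilibrium measure in the scaled external field $\varphi/T$ with $\supp\mu_T=\R$, which proves (iii).

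For part (i), I would work with the probability measures $\mu_t:=\lambda_t/t$, which satisfy $V^{\mu_t}+\varphi/t=c_t/t$ on $S_t$. A diagonal extraction of vague subsequential limits on compact intervals, combined with lower semicontinuity of the logarithmic energy, shows that every limit point of the family $\{\mu_t\}$ is a probability measure satisfying the equilibrium condition for the external field $\varphi/T$; uniqueness identifies the limit with $\mu_T$. Since $\mu_T$ is absolutely continuous, every Borel set is a continuity set and the convergence upgrades to $\mu_t(I)\to\mu_T(I)$ for every Borel $I\subset\R$.

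For part (ii), I would pass to the limit in \eqref{Cauchytr}. From weak convergence plus uniform mass bounds, $\widehat{\lambda_t}\to\widehat{\lambda_T}$ uniformly on compacta of $\C\setminus\R$, and the right-hand side satisfies
\begin{equation*}
(T-t)\sqrt{R(z)}\;\longrightarrow\;\varphi'(z)-\widehat{\lambda_T}(z)\,=\,\sum_{j=1}^q\gamma_j\!\left(\frac{1}{2(z-z_j)}-\frac{1}{2(z-\bar z_j)}\right)\,=:\,\Phi(z)
\end{equation*}
in the upper half plane. The limit $\Phi$ is a single-valued rational function whose numerator has degree $2q-2$ with all zeros in $\C\setminus\R$ (since $\Im\Phi(x)>0$ on $\R$). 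For the multi-valued function $(T-t)B(z)\sqrt{A(z)}/D(z)$ to converge to the single-valued $\Phi$, the branch cut of $\sqrt{A}$ must escape to infinity, forcing $k=1$ with $a_1\to-\infty$ and $a_2\to+\infty$. Comparison of leading orders then gives $(T-t)\sqrt{(z-a_1)(z-a_2)}\to i\sum_j\gamma_j\Im z_j$, so $B(z)\to\Phi(z)D(z)/[i\sum_j\gamma_j\Im z_j]$, which yields the stated convergence of the zeros of $B$ to the zeros of $\Phi$.

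The main obstacle is rigorously establishing $k(t)=1$ for $t$ close to $T$. This would be handled by combining monotonicity of the supports $S_t$, finiteness of phase transitions of types I--III on $(0,T)$, and the fact that $\supp\mu_T=\R$ with strictly positive density, which prevents any bounded gap from persisting in the limit.
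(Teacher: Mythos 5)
Your balayage argument for part (iii) is a genuinely different and more conceptual route than the paper's. You identify $\mu_T$ with the normalized balayage of $\sigma=\sum_j\gamma_j\delta_{z_j}$ onto $\R$, read the Cauchy density off the Poisson kernel, and invoke uniqueness of the Frostman characterization; the paper instead first shows $\supp\mu_T=\R$ by studying the function $\phi_t(x)=V^{\mu_t}(x)+\varphi(x)-c_t$, which it proves to be decreasing in $t$ and dominated by $\frac{T-t}{T}(\varphi(x)-m)$ via the decomposition $V^{\lambda_t}=V^{\nu_t}-\frac{T-t}{T}\varphi+c_t$ and the Domination Principle, and then pins down $\widehat\mu_T$ from analyticity on $\C\setminus(\R\cup\{z_j,\bar z_j\})$, the jump relation, positivity, the residues, and Liouville. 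Both arrive at the same rational function; your argument is cleaner for (iii) and makes the balayage interpretation explicit, which the paper leaves implicit.

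For part (i), however, your argument has a real gap that the paper's does not. Vague subsequential limits of $\mu_t$ can lose mass at infinity --- and indeed the supports $S_t$ run off to $\pm\infty$ as $t\to T$, so tightness is exactly the issue --- and lower semicontinuity of the energy by itself does not supply it; you would need to exploit the $T\log|x|$ growth of $\varphi$ separately. The paper avoids compactness entirely: by Buyarov--Rakhmanov the un-normalized family $\lambda_t$ is increasing in $t$, so $\lambda_t(I)$ increases and is bounded by $T$, giving setwise convergence by monotone convergence. Note that dividing by $t$ (your normalization $\mu_t=\lambda_t/t$, which is what makes the density in (iii) integrate to one) destroys that monotonicity, so if you keep the probability normalization you genuinely must supply the tightness step the paper's choice of scaling lets it skip. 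Your outline of (ii) is sound and you correctly flag the crux --- establishing $k(t)=1$ near $T$ --- but it remains a sketch; the cleanest finish is to observe, after (i) and (iii), that the limit $\widehat\mu_T$ has no real zeros, so the zeros of $B$ must eventually leave $\R$ and the support must eventually be a single interval, while the divergence of $a_1,a_2$ follows from $\supp\mu_T=\R$ together with monotonicity of the supports.
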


To end this section, let us consider the variation of the equilibrium measure when some of the parameters of the external field \eqref{fixedfield} vary; that is, we mean the evolution of the equilibrium measure when one of the ``masses'' $\gamma_j$ or ``heights'' $\beta_j$ is varying.

In order to do it, we present now a simplified version of \cite[Theorem 5]{MOR2015}, where the authors extended the seminal Buyarov-Rakhmanov result for the variation of the equilibrium measure with respect to other parameters in the external field, which is sufficient for our purposes (see also \cite[Theorem 1.3]{OrSL2015})
\begin{theorem}\label{parameters}
Let $t>0$ be fixed and suppose that the function $\varphi(x;\tau)$ is real-analytic for $x\in \R$ and $\tau \in (c,d)$, where $(c,d)$ is a real interval. Let $\lambda = \lambda_{t,\tau}$ denote the equilibrium measure in the external field $\varphi(x;\tau)$, for $\tau \in (c,d)$, with support $S_{t,\tau}$. Then, for any $\tau_0 \in (c,d)$, $$\frac{\partial \lambda}{\partial \tau}\,|_{\tau=\tau_0} = \omega\,,$$ where the measure $\omega$ is uniquely determined by the conditions
\begin{equation}\label{condparam}
\supp \omega = S_{t,\tau_0}\,,\;\omega (S_{t,\tau_0}) = 0\,,\;V^{\omega} + \frac{\partial \varphi(x;\tau)}{\partial \tau}\,|_{\tau=\tau_0} = \frac{\partial c_t}{\partial \tau}\,|_{\tau=\tau_0} = const\;\;\text{on}\;\; S_{t,\tau_0}
\end{equation}

\end{theorem}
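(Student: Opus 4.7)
The plan is to derive the three characterizing conditions for $\omega := \frac{\partial \lambda_{t,\tau}}{\partial \tau}\big|_{\tau=\tau_0}$ by differentiating the equilibrium identity in $\tau$, and then to prove uniqueness of any signed measure satisfying those conditions via the positive definiteness of the logarithmic energy on measures of zero total mass.

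First, I would restrict to $\tau_0\in(c,d)$ at which no phase transition occurs. Under the real-analyticity hypothesis on $\varphi(x;\tau)$, the algebraic relation \eqref{Cauchytr} together with the implicit function theorem applied to the residue equations that fix the zeros of $R$ imply that, in a neighborhood of $\tau_0$, the endpoints $a_j(\tau)$ of $S_{t,\tau}$ and the remaining zeros $b_j(\tau)$ depend real-analytically on $\tau$, and the density $\lambda'_{t,\tau}(x)$ from \eqref{density} depends real-analytically on $\tau$ uniformly on compact subsets of the interior of $S_{t,\tau_0}$. Since $\lambda'_{t,\tau}$ vanishes like a square root at the endpoints, differentiation in $\tau$ produces no Dirac contributions at the moving endpoints, so $\omega$ is a well-defined signed measure with $\supp \omega \subset S_{t,\tau_0}$. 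The neutrality $\omega(S_{t,\tau_0})=0$ follows at once from the fact that $\lambda_{t,\tau}(\R)=t$ is constant in $\tau$.

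Next, I would differentiate the equilibrium identity $V^{\lambda_{t,\tau}}(x)+\varphi(x;\tau)=c_{t,\tau}$, valid on $S_{t,\tau}$, at $\tau=\tau_0$. By the linearity of the logarithmic potential in its measure argument,
$$V^{\omega}(x)+\frac{\partial \varphi}{\partial \tau}(x;\tau_0)=\frac{\partial c_t}{\partial \tau}\bigg|_{\tau=\tau_0}, \qquad x\in S_{t,\tau_0},$$
which is the third condition in \eqref{condparam}. The one delicate point is that $x\in S_{t,\tau_0}$ need not lie in $S_{t,\tau}$ for $\tau$ near $\tau_0$; this is handled by first choosing $x$ in the open interior of each connected component of $S_{t,\tau_0}$ and using the real-analytic dependence of the endpoints $a_j(\tau)$ to ensure $x\in S_{t,\tau}$ for all $\tau$ sufficiently close to $\tau_0$, after which continuity propagates the identity to the closure.

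Finally, to establish uniqueness, suppose $\omega_1,\omega_2$ both satisfy \eqref{condparam}. Then $\nu:=\omega_1-\omega_2$ is a signed measure with $\supp \nu\subset S_{t,\tau_0}$, $\nu(\R)=0$, and $V^{\nu}\equiv 0$ on $S_{t,\tau_0}$. Consequently the logarithmic energy satisfies $I(\nu)=\int V^{\nu}\,d\nu=0$. Since $S_{t,\tau_0}$ has positive logarithmic capacity and $\nu$ has total mass zero, the standard positive definiteness of the logarithmic energy on such signed measures (see e.g.\ \cite{Saff:97}) forces $\nu=0$. The main technical obstacle is rigorously justifying the passage of $\partial_\tau$ through the integral defining $V^{\lambda_{t,\tau}}$ uniformly in $x$ near the moving endpoints of $S_{t,\tau}$; this is precisely where the analyticity assumption and the Buyarov--Rakhmanov variational machinery, here adapted to a general parameter rather than to $t$, enter the argument.
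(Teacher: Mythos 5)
The paper does not actually prove Theorem~\ref{parameters}; it is stated as ``a simplified version of \cite[Theorem 5]{MOR2015}'' (see also \cite[Theorem 1.3]{OrSL2015}) and used as a black box, so there is no in-paper proof to compare against. Your sketch is the expected variational argument and is broadly sound, but a few points need repair. First, in the uniqueness step you assert $V^{\nu}\equiv 0$ on $S_{t,\tau_0}$; in fact the third condition in \eqref{condparam} only fixes $V^{\omega_i}+\partial_\tau\varphi$ up to the \emph{unknown} constant $\partial c_t/\partial\tau$, so what you actually get is $V^{\nu}\equiv\text{const}$ on $S_{t,\tau_0}$. The conclusion survives because $I(\nu)=\int V^{\nu}\,d\nu=\text{const}\cdot\nu(\R)=0$ and the positive definiteness of the logarithmic energy on neutral compactly supported signed measures still forces $\nu=0$, but as written the step is wrong. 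Second, you establish only $\supp\omega\subset S_{t,\tau_0}$, whereas \eqref{condparam} asserts equality; you would need to argue (e.g.\ from the algebraic form of the density, which after differentiation is of the type $F(x)/\sqrt{|A(x)|}$ with $F$ polynomial, hence nonvanishing off a finite set) that the derivative does not vanish identically on any subinterval. Third, you quietly restrict to $\tau_0$ where no phase transition occurs, but the theorem as stated claims the result ``for any $\tau_0\in(c,d)$''; either the statement tacitly excludes singular $\tau_0$ (as the Buyarov--Rakhmanov framework does for an at most denumerable set) or a one-sided-derivative formulation is needed at transition points, and your proof should flag this discrepancy rather than silently narrowing the claim.
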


Observe that the second formula in \eqref{condparam} means that $\omega$ is a type of signed measure which is often called a \emph{neutral} measure; it is a natural consequence of the fact that $t$, the total mass of $\lambda$, does not depend on parameter $\tau$.

We are concerned, first, with the situation when one of the ``masses'' $\gamma_j$, with $j\in\{1,\ldots,q\}\,,$ varies in $(0,+\infty)$. In this case, taking into account that $\displaystyle \frac{\partial \varphi(z)}{\partial \gamma_j} = \log |z-z_j|\,,$ Theorem \ref{parameters} above and the $AB$-representation \eqref{Cauchytr}-\eqref{density} yield
\begin{equation}\label{parammass}
\frac{\partial}{\partial \gamma_j}\,\left((T-t)\,\frac{B(z)\sqrt{A(z)}\,}{D(z)}\right)\,=\,\frac{H(z)}{(z-z_j)(z-\overline{z_j})\,\sqrt{A(z)}}\,,
\end{equation}
where $H$ is a monic polynomial of degree $(k+1)$ which has a zero of odd order on each of the $(k-1)$ gaps of the support of the equilibrium measure, and such that at least one of the other two zeros lies inside the convex hull of the support. In particular, when $k=1$, that is, the one-cut case, \eqref{parammass} yields, for the endpoints of the single interval comprising the support, the following dynamical system:
\begin{equation*}\label{dynammass}
\begin{split}
\frac{\partial a_1}{\partial \gamma_j} = & \,-\,\frac{2}{T-t}\,\frac{(a_1-h_1)(a_1-h_2)\,\widetilde{D_j}(a_1)}{(a_1-a_2)\,B(a_1)}\,, \\
\frac{\partial a_2}{\partial \gamma_j} = & \,-\,\frac{2}{T-t}\,\frac{(a_2-h_1)(a_2-h_2)\,\widetilde{D_j}(a_2)}{(a_2-a_1)\,B(a_2)}\,,
\end{split}
\end{equation*}
where $B$ is, in this case, a monic polynomial of degree $2q-2$ being positive in the interval $(a_1,a_2)$, $h_1,h_2$, the roots of polynomial $H$ in \eqref{parammass}, is a pair or real numbers, such that at least one of them belongs to $(a_1,a_2)$, and
$$ \widetilde{D_j}(z) = \,\frac{D(z)}{(z-z_j)(z-\overline{z_j})}\,=\,\prod_{l\neq j}\,(z-z_l)(z-\overline{z_l})\,=\,\prod_{l\neq j}\,\left((z-\Re z_l)^2+(\Im z_l)^2\right)\,>\,0\,.$$
Therefore, the increase or decrease of the endpoints when $\gamma_j$ grows depends on the position of the points $h_1,h_2\,,$ which, in turn, is determined by the relative position of the charge $z_j$ in the set $\{z_1,\ldots,z_q\}$. In a similar way, the dynamical system for the other zeros of the density of the equilibrium measure ($2q-2$ zeros of polynomial $B$) may be displayed.

In a similar fashion, the evolution of the support when one of the heights $\beta_j$ varies may be handled. In this case, we have that $\displaystyle \frac{\partial \varphi(z)}{\partial \beta_j} = \frac{\gamma_j\,\beta_j}{(z-z_j)(z-\overline{z_j})}\,,$ and thus, Theorem \ref{parameters} implies that
\begin{equation}\label{paramheight}
\frac{\partial}{\partial \beta_j}\,\left((T-t)\,\frac{B(z)\,\sqrt{A(z)}}{D(z)}\right)\,=\,\frac{K(z)}{(z-z_j)^2\,(z-\overline{z_j})^2\,\sqrt{A(z)}}\,,
\end{equation}
where now the polynomial $K$, not necessarily monic, has degree $\leq (k+2)$ and a zero of odd multiplicity on each of the $(k-1)$ gaps of the support, and such that at least one of the other zeros lies inside the convex hull of the support.
As above, for $k=1$, that is, when the one-cut case takes place, \eqref{paramheight} yields the following dynamical system for the endpoints of the single interval comprising the support:
\begin{equation*}\label{dynamheight}
\begin{split}
\frac{\partial a_1}{\partial \beta_j} = \,& \frac{2}{T-t}\,\frac{K(a_1) D(a_1)}{(a_1-a_2)\,B(a_1)\,(a_1-z_j)^2\,(a_1-\overline{z}_j)^2}\,, \\
\frac{\partial a_2}{\partial \beta_j} = \,& \frac{2}{T-t}\,\frac{K(a_2) D(a_2)}{(a_2-a_1)\,B(a_2)\,(a_2-z_j)^2\,(a_2-\overline{z}_j)^2}\,,
\end{split}
\end{equation*}

The next section is devoted to the simplest (but quite difficult) non-trivial case, where the external field is created by two prescribed charges, that is, $q=2$. The full description of the dynamics of the equilibrium measure may be done for this situation.

\section{An external field created by two prescribed charges}

Throughout this Section, we restrict to the case of the equilibrium problem in the presence of a couple of (attractive) prescribed charges. In particular, and without lack of generality, we consider external fields of the form:
\begin{equation}\label{couple}
\varphi (x) = \log |x-z_1| + \gamma \,\log \, |x-z_2|\,,\;\gamma > 0\,,\;z_1,z_2 \in \C \setminus \R\,.
\end{equation}
That is, we are concerned with the case where $q=2$ in \eqref{fixedfield} and, thus, we know that the number of intervals (``cuts'') comprising the support $S_t$ is given by $1$ or $2$. We can assume $\Re z_1 = -\Re z_2 = -1$, as well as $\Im z_1 = \beta_1 > 0, \Im z_2 = \beta_2 >0\,,$ also without loss of generality.  Now, the evolution of the equilibrium measure $\lambda_t$ and, in particular, of its support $S_t$, in the presence of the external field \eqref{couple}, depending of three parameters, $\beta_1,\beta_2 > 0$ and $\gamma > 0$, for $t\in(0,T)$, with $T = 1+\gamma$, is investigated.

In the particular case handled in this section, where the external field is due to a couple of prescribed charges \eqref{couple}, we have that \eqref{Cauchytr} holds, with $deg A \in \{2,4\}$ and $deg B = 3- \frac{deg A}{2}$. Thus, taking residues at $z=z_1$ and $z=z_2$ in \eqref{Cauchytr} yields
\begin{equation}\label{residues}
\begin{cases}(T-t)\, B(z_1)\,\sqrt{A(z_1)}\,-\,i\,\Im z_1\, (z_1-z_2)(z_1-\overline{z}_2) & = 0\,,\\
(T-t)\,B(z_2)\,\sqrt{A(z_2)}\,-\,i\,\gamma \,\Im z_2\, (z_2-z_1)(z_2-\overline{z}_1) & = 0\,, \end{cases}
\end{equation}
and, after taking real and imaginary parts, we finally arrive to a nonlinear system of four equations. Thus, system \eqref{residues} determines uniquely polynomials $A$ and $B$ in the one--cut case; but if the support consists of two disjoint intervals, then an additional condition \eqref{extraconds} is also necessary.

Now, combining the two ingredients above, that is, formulas \eqref{Cauchytr} and \eqref{variationzeros}, we have the following possible settings for the support $S_t$ of the equilibrium measure and its density (for non-singular values of $t\in (0,T)$).
\begin{itemize}

\item [(\textbf{one-cut})] $S_t = [a_1,a_2]\,,\;a_1 = a_1(t) < a_2 = a_2(t)\,$ and
\begin{equation*}\label{density1}
\lambda'_t (x) = \,\frac{T-t}{\pi}\,\frac{(x-b_1)(x-b_2)\,\sqrt{(x-a_1)(a_2-x)}}{D(x)}\,,
\end{equation*}
with $b_2<b_1<a_1<a_2$ and
$$ \int_{b_2}^{a_1}\,\frac{(x-b_1)(x-b_2)\,\sqrt{(x-a_1)(x-a_2)}}{D(x)}\,dx\,>\,0\,,$$
 or $a_1<a_2<b_1<b_2$ and
 $$ \int_{a_1}^{b_2}\,\frac{(x-b_1)(x-b_2)\,\sqrt{(x-a_1)(x-a_2)}}{D(x)}\,dx\,>\,0\,,$$ or, finally, $b_2 = \overline{b}_1 \in \C \setminus \R\,.$ In this scenario, we also have, for $i,j=1,2$ and $j\neq i$,
\begin{equation}\label{dynamics1}
\dot{a_i} = \frac{1}{T-t}\,\frac{2D(a_i)}{(a_i-a_j)(a_i-b_1)(a_i-b_2)}\,,\;\dot{b_i} = \frac{1}{T-t}\,\frac{D(b_i)}{(b_i-b_j)(b_i-a_1)(b_i-a_2)}\,.
\end{equation}

\item [(\textbf{two-cut})] $S_t = [a_1,a_2]\,\cup \,[a_3,a_4]\,,\;a_1 = a_1(t) < a_2 = a_2(t) < a_3 = a_3(t) < a_4 = a_4(t)\,$ and
\begin{equation*}\label{density2}
\lambda'_t (x) =
\,\frac{T-t}{\pi}\,\frac{|x-b_1|\,\sqrt{(x-a_1)(x-a_2)(x-a_3)(a_4-x)}}{D(x)}\,,
\end{equation*}
with $a_2<b_1<a_3$ and $$\int_{a_2}^{a_3}\,\frac{(x-b_1)\sqrt{(x-a_1)(x-a_2)(x-a_3)(x-a_4)}}{D(x)}\,dx\,=\,0\,.$$ In this case, it holds the dynamical system
\begin{equation*}\label{dynamics2}
\dot{a_i} = \frac{1}{T-t}\,\frac{2D(a_i)\,F(a_i)}{A'(a_i)(a_i-b_1)}\,,\;\dot{b_1} = \frac{1}{T-t}\,\frac{D(b_1)\,F(b_1)}{A(b_1)}\,,\;i=1,2,3,4\,,
\end{equation*}
where $F(x) = x - \zeta$, with $\zeta$ uniquely determined by the condition $$\int_{a_2}^{a_3}\,\frac{(x-\zeta)}{\sqrt{(x-a_1)(x-a_2)(x-a_3)(x-a_4)}}\,dx\,=\,0\,.$$
\end{itemize}

Now, we are going to state our main result. In order to do it, consider the bivariate polynomial
\begin{equation}\label{boundary}
f(x,y) = 27xy(x-y)^2 - 4(x^3+y^3) + 204xy(x+y) - 48(x^2-7xy+y^2+4x+4y) - 256\,.
\end{equation}

For $x,y >0$, it is a symmetric function with respect to its arguments, and the graphic of the curve
\begin{equation}\label{curveC}
\mathcal{C} = \left\{(\beta_1,\beta_2) \in (0,+\infty) \times (0,+\infty)\,:\,f(\beta_1^2,\beta_2^2) = 0 \right\}
\end{equation}
is decreasing and splits the open first quadrant of the $(\beta_1,\beta_2)$-plane into two domains: $\Omega_0$, with the origin belonging to its closure, and $\Omega_{\infty}$ (see Figure \ref{fig:regionesomegas}). The curve $\mathcal{C}$ has two asymptotes at $\beta_1 = \,\frac{2}{3\,\sqrt{3}}$ and $\beta_2 = \,\frac{2}{3\,\sqrt{3}}\,.$

\begin{figure}
    \begin{center}
        \includegraphics[scale=0.6]{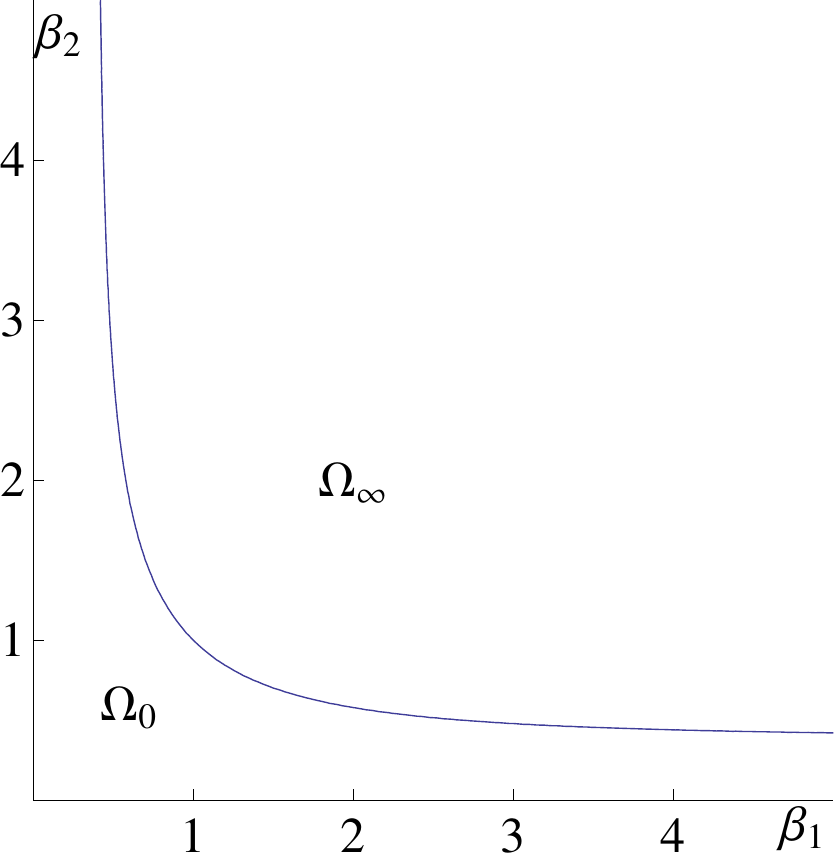}
    \end{center}
    \caption{Regions $\Omega_0$ and $\Omega_{\infty}$ and the curve $\mathcal{C}$ }
    \label{fig:regionesomegas}
\end{figure}

\begin{theorem}\label{thm:main}
Let $S_t$ be the support of $\lambda_t$, the equilibrium measure in the external field \eqref{couple}, with $z_1 = -1+i \beta_1, z_2 = 1+i \beta_2$. Then, we have

\begin{itemize}

\item If $(\beta_1,\beta_2)\in \Omega_{\infty}\,\cup \,\mathcal{C}$ (that is, $f(\beta_1^2,\beta_2^2) \geq 0$), then $S_t$ consists of a single interval (``one-cut'') for any $\gamma > 0$ and $t\in (0,T)$.

\item If $(\beta_1,\beta_2)\in \Omega_0$ ($f(\beta_1^2,\beta_2^2) < 0$), then there exist two values $0 < \Gamma_1 = \Gamma_1 (\beta_1,\beta_2) < \Gamma_2 = \Gamma_2 (\beta_1,\beta_2)$ such that for $\gamma \in (\Gamma_1, \Gamma_2)$ there are two critical values $0 \leq T_1 = T_1(\beta_1,\beta_2,\gamma) < T_2 = T_2(\beta_1,\beta_2,\gamma) < T$, in such a way that $S_t$ consists of two disjoint intervals for $t\in (T_1,T_2)\,$ (``two-cut''). Otherwise, $S_t$ consists of a single interval.

\end{itemize}

\end{theorem}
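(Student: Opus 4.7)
The plan is to reduce the one-cut/two-cut dichotomy to an algebraic question about the critical points of $\varphi$, and then to identify the answer with the polynomial $f$ of \eqref{boundary}. The first step is to prove the equivalence that a two-cut phase exists for some $t \in [0,T)$ if and only if $\varphi$ has three distinct real critical points on $\mathbb{R}$. For the direction $(\Leftarrow)$, if $\varphi$ has two local minima $y_1,y_2$ and one local maximum $y_m$ in between, then either $\varphi(y_1)=\varphi(y_2)$ (in which case $S_0=\{y_1,y_2\}$ and $T_1=0$), or $\varphi(y_1)\neq\varphi(y_2)$: then $S_0$ is a single point but the secondary local minimum of $U_t = V^{\lambda_t}+\varphi$ (inherited from $y_2$ for small $t$) must eventually be absorbed; the dynamical system \eqref{dynamics1} together with the monotonicity $S_t\uparrow\mathbb{R}$ (Theorem~\ref{thm:end}(ii)) preclude a single-cut absorption across the intervening local maximum at $y_m$, which forces a type I singularity at some $T_1\in(0,T)$. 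Theorem~\ref{thm:end}(ii) then forces a subsequent type II singularity at some $T_2 < T$. For the direction $(\Rightarrow)$, in the two-cut case the identity $U_t'(x) = (T-t) B(x)\sqrt{A(x)}/D(x)$ from \eqref{Cauchytr} shows that the unique real zero of $B$ in the gap is a local maximum of $U_t$; tracking this zero continuously back via \eqref{variationzeros}, and using $U_t \to \varphi$ uniformly on compacts as $t \to 0^+$, shows that its limit is a local maximum of $\varphi$, so $\varphi$ has three real critical points.

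\textbf{Stage 2 (cubic discriminant).} The numerator of $\varphi'$ is the cubic
\[
N(x;\gamma,\beta_1,\beta_2) = (1+\gamma)x^3 + (\gamma-1)x^2 + \bigl[\beta_2^2 - 1 + \gamma(\beta_1^2-1)\bigr]x + \bigl[(1+\beta_2^2) - \gamma(1+\beta_1^2)\bigr],
\]
so ``three distinct real critical points of $\varphi$'' is equivalent to $\Delta(\gamma;\beta_1,\beta_2) > 0$, where $\Delta$ is the cubic discriminant of $N$ in $x$. Since $N(x;0)$ factors as $(x+1)[(x-1)^2+\beta_2^2]$ and the leading $\gamma$-part factors as a multiple of $(x-1)[(x+1)^2+\beta_1^2]$, $\Delta(\gamma)$ is negative both near $\gamma=0$ and near $\gamma=+\infty$. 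The polynomial $\gamma \mapsto \Delta(\gamma;\beta_1,\beta_2)$ has degree four; the swap invariance $(\gamma,\beta_1,\beta_2) \leftrightarrow (1/\gamma,\beta_2,\beta_1)$ of the underlying problem sharply constrains its structure, and one verifies that its positivity set in $(0,+\infty)$, when nonempty, is a single open interval $(\Gamma_1,\Gamma_2)$.

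\textbf{Stage 3 (the curve $\mathcal{C}$).} The transition between ``positivity set empty'' and ``positivity set nonempty'' is characterized by the vanishing of $\Res_\gamma(\Delta,\partial_\gamma\Delta)$. Computing this resultant and cancelling nonzero factors controlled by the behaviour of $\Delta$ at $\gamma=0$ and $\gamma=+\infty$, one should verify that it is proportional to $f(\beta_1^2,\beta_2^2)$ as given in \eqref{boundary}. The symmetry $f(x,y)=f(y,x)$ reflects the swap invariance from Stage 2, and the asymptotes $\beta_j=2/(3\sqrt 3)$ correspond to the critical heights at which a single-charge configuration would exhibit a degenerate triple critical point. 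Combining Stages 1--3, together with the monotone evolution of the inner endpoints and of the gap zero of $B$ under the two-cut dynamics, yields the claimed values $0\leq T_1<T_2<T$ and completes the proof.

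The principal obstacle I anticipate lies in the explicit algebraic identification in Stage 3: $\Res_\gamma(\Delta,\partial_\gamma\Delta)$ produces a large symmetric polynomial in $\beta_1^2,\beta_2^2$, and showing it coincides with $f$ up to an inessential positive factor requires careful simplification (guided by the swap symmetry and by the asymptotic directions, but not automatic). The dynamical and qualitative arguments in Stages 1 and 2 follow the pattern already established for polynomial and rational external fields with a polynomial part in \cite{MOR2015} and \cite{OrSL2015}.
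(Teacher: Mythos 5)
Your Stage 1 equivalence is false, and the paper explicitly flags this: ``the fact that the external field has two real minima is a sufficient condition for the existence of a two-cut phase, it is not a necessary one.'' The paper's Theorem \ref{thm:dynamics}(b) exhibits a range of $\gamma$'s -- namely $\gamma\in(\Gamma_1,\widetilde{\Gamma}_1]\cup[\widetilde{\Gamma}_2,\Gamma_2)$ -- for which $\varphi$ has a \emph{single} real critical point yet $S_t$ is nonetheless two--cut over an interval of $t$. In that regime the mechanism is the spontaneous appearance at some $t=T_0>0$ of a pair of new local extrema of the total potential: a pair of complex conjugate zeros of $B$ hits the real axis and splits into two real zeros, which later seed the type I singularity. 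Your $(\Rightarrow)$ argument, in which you track the gap zero of $B$ backward via \eqref{variationzeros} and identify its limit as $t\to 0^+$ with a critical point of $\varphi$, breaks down precisely here: the backward trajectory of that zero does not stay real all the way to $t=0$, but collides with its partner at $T_0$ and exits into $\mathbb{C}\setminus\mathbb{R}$. Its $t\to 0^+$ limit is one of the complex zeros of the rational function in Theorem \ref{thm:end}(ii), not a real local maximum.

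As a consequence, the values your Stage 2 produces from the discriminant $\Delta(\gamma;\beta_1,\beta_2)$ of the cubic $N$ are $\widetilde{\Gamma}_1,\widetilde{\Gamma}_2$ (the thresholds for $\varphi$ having two minima), not the $\Gamma_1,\Gamma_2$ of the theorem. Since $\Gamma_1<\widetilde{\Gamma}_1<\widetilde{\Gamma}_2<\Gamma_2$ with strict inequalities, asserting ``otherwise single interval'' for $\gamma\notin(\widetilde{\Gamma}_1,\widetilde{\Gamma}_2)$ is simply wrong. The true $\Gamma_1,\Gamma_2$ are characterized by a type III singularity (a pair of conjugate zeros of $B$ colliding with an endpoint of $S_t$), and the paper establishes their existence through Proposition~\ref{prop:main} and Lemmas~\ref{lem:cotsuma}--\ref{lem:position}: one fixes $(\beta_1,\beta_2)$, tracks the branch of configurations on which $B$ has a double real root outside the support, derives a dynamical system in the parameter $\gamma$ for $(a_1,a_2,b,t)$, pins down the signs of $\dot a_1,\dot a_2,\dot b$ via an argument--balance computation, and concludes that decreasing $\gamma$ from $\widetilde{\Gamma}_1$ must eventually force $b$ to collide with $a_2$. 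Your proposal has no analog of this geometric continuation; it gets the curve $\mathcal{C}$ right (which does mark the projection onto the $(\beta_1,\beta_2)$-plane of where a two--cut phase is possible at all, by Corollary~\ref{cor:equivalence}), but it cannot produce the correct $\gamma$-interval.
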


\begin{remark}\label{rem:recipe}
The expression of the ``boundary-curve'' $\mathcal{C}$ may be easily obtained by imposing that the derivative $\varphi'(x)$ of the external field \eqref{couple} has a triple real root. The recipe to determine the values of $\Gamma_1,\Gamma_2$ and $T_1,T_2$ will be shown within the proofs in Section 4 below.
\end{remark}

\begin{remark}\label{rem:heights}
The result in Theorem \ref{thm:main} means that the relationship between the distances of the two attractive charges to the real axis (``heights'') determines the possible existence of a two-cut phase. It seems natural, but a curious phenomenon also takes place: if one of the charges is close enough to the real axis, say $\beta_1 < \frac{2\sqrt{3}}{9} \sim 0.385$, a range of admissible values of the ``mass'' $\gamma$ may be found for any value of the other height, $\beta_2$, in order to allow the existence of a two--cut phase. Roughly speaking, it seems to tell us that if one of the charges is sufficiently close to the real axis, then it is always possible to distinguish both charges from there, whatever the distance of the other one (provided a suitable fit between the masses, of course).

Obviously, this ``positive'' result has a ``negative'' counterpart: if the couple of attractive charges are sufficiently far from the real axis (i.e., $(\beta_1,\beta_2)\in \Omega_{\infty}\cup \emph{C}$), they are indistinguishable from there (in the sense that they are unable to split the support of the equilibrium measure) whatever the masses.

\end{remark}

\begin{remark}\label{rem:end}
The result in previous Theorem \ref{thm:end} may be easily illustrated in this case. Indeed, we have that the density of the limit measure (as $t\rightarrow T=1+\gamma$) is given by
$$\frac{d\mu_T}{dx}\,=\,\frac{1}{T \pi}\,\left(\frac{\beta_1}{(x+1)^2+\beta_1^2}\,+\,\frac{\gamma \,\beta_2}{(x-1)^2+\beta_2^2}\right)\,,\;x\in \R\,,$$
whose zeros are imaginary for any $\beta_1, \beta_2, \gamma > 0$.

\end{remark}

\begin{remark}\label{rem:particular}
As it was said above, some particular situations were considered in \cite{OrGa2010}; in particular, the so-called ``totally symmetric'' case, that is, where heights and masses are equal ($\beta_2 = \beta_1 = \beta$ and $\gamma = 1$ in our current notation) and a ``partially symmetric'' case, where just the heights are supposed to be equal. With respect to the latter one, it is easy to see that the intersection between the curve $\mathcal{C}$ and the bisector $\beta_2 = \beta_1$ consists of the point $\beta_1 = \beta_2 = 1$. Thus, we conclude that a two-cut phase is feasible in this partially symmetric situation if and only if the common height $\beta < 1$ or, what is the same, if the two charges are close enough to the real axis to be able to split the support.

The special situation of the totally symmetric case will be revisited with more detail after Theorem 3.4 below, where the evolution of the support $S_t$ is described.
\end{remark}

\begin{remark}\label{exp convex}
It is well-known that the convexity of the external field ensures that the support of the equilibrium measure is an interval (see e.g. \cite{Saff:97}). In \cite{BDD2006} a weaker sufficient condition is given, namely, the convexity of the function $\exp (\varphi)\,.$ We can check whether this condition is fulfilled when $(\beta_1,\beta_2)\in \Omega_{\infty} \,\cup \,\mathcal{C}$ and, thus, whether in this sense the first part of Theorem \ref{thm:main} is a consequence of that previous result. However, it is possible to find examples with $(\beta_1,\beta_2) \in \Omega_{\infty} \,\cup \,\mathcal{C}$ and $\gamma >0$ such that $\exp \varphi$ is not convex. Indeed, it is easy to check that $(\beta_1,\beta_2)\in \Omega_{\infty}$ for $\beta_1 = 0.5$ and $\beta_2 = 2.7$, but $\exp \varphi$ is non-convex for these values when we take, for instance, $\gamma = 5.6$.

\end{remark}

\begin{remark}\label{rem:OP}
In a similar fashion as in the applications to the asymptotics of Heine-Stieltjes polynomials or to Random Matrix models considered in Section 1.2, results in Theorem \ref{thm:main} also may be used to describe the support of the limit zero distribution of polynomials $P_n\,,$ with $deg \, P_n = n$, satisfying varying orthogonality relations of the form $$\int \,x^k\,P_n(x)\,\omega_n(x)\,dx\,=\,0\,,\;k=0,\ldots,n-1\,,$$ where the varying weight $\omega_n$ is a generalized Jacobi--type weight given by
$$\omega_n(x) = \,\frac{1}{|x-z_1|^{\alpha_n}\,|x-z_2|^{\beta_n}}\,,\;x\in \R\,,$$ with $\alpha_n + \beta_n > 2n\,,\;n\in \N\,,$ in such a way that $\displaystyle \lim_{n\rightarrow \infty}\,\frac{\alpha_n}{n} = \,\emph{A}>0$ and $\displaystyle \lim_{n\rightarrow \infty}\,\frac{\beta_n}{n} = \,\emph{B}\,,$ and $A+B>2.$ Indeed, it is enough to set $\displaystyle \gamma = \, \frac{\emph{B}}{\emph{A}}$ and $t = \,\frac{2}{\emph{A}}$.

\end{remark}

For the proof of Theorem 3.1 we need a number of results which are also of interest themselves, in such a way that all together describe the different scenarios in the evolution of the equilibrium measure when $t$ grows from $0$ to $T = 1+\gamma$. Indeed, our main result, Theorem 3.1, is a synthesis of such a full description.

First, in \cite{MOR2015} and \cite{OrSL2015}, it was shown that the knowledge about the set of minima of the external field plays a key role in describing the evolution of the equilibrium measure $\lambda_t$ when $t$ varies. In those cases, the existence of two relative minima of the external field was shown as a sufficient condition for the existence of a two-cut phase (that is, a range of values of $t$ for which the support comprises two disjoint intervals). It will be also true in the current case (see Theorem \ref{thm:dynamics} below; in fact, the existence of a two-cut phase when the external fields has at least two minima is true for a more general setting).

First, taking into account that the relative minima of the external field $\varphi$ are roots of the polynomial
\begin{equation}\label{dphi}
P(x) = (x^2-1)( (x-1) + \gamma (x+1)) +  \gamma \beta_1^2 (x-1) + \beta_2^2 (x+1)\,,
\end{equation}
it is easy to see that the real relative minima lie on the interval $(-1,1)$. Moreover, we have,
\begin{theorem}\label{thm:2M}
Consider the external field \eqref{couple}, with $z_1=-1+\beta_1 i\,,\;z_1=1+\beta_2 i\,$ and $\gamma >0\,.$ Then,
\begin{itemize}
\item If $f(\beta_1^2,\beta_2^2) < 0\,,$ with $f$ given by \eqref{boundary}, or, what is the same, $(\beta_1,\beta_2)\in \Omega_0$, there exists two values $0<\widetilde{\Gamma}_1=\widetilde{\Gamma}_1(\beta_1,\beta_2)<\widetilde{\Gamma}_2 = \widetilde{\Gamma}_2(\beta_1,\beta_2)\,,$ such that \eqref{couple} has two real minima for $\gamma \in (\widetilde{\Gamma}_1,\widetilde{\Gamma}_2)$.
\item If $f(\beta_1^2,\beta_2^2) \geq 0\,,$ that is, $(\beta_1,\beta_2)\in \Omega_{\infty} \,\cup \,\mathcal{C}$, then \eqref{couple} has a single real minimum for any $\gamma > 0$.
\end{itemize}
\end{theorem}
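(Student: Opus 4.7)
The plan is to reduce the question to a bifurcation analysis of the cubic $P(x)$ from \eqref{dphi}. Since both charges carry positive mass, $\varphi(x)\to +\infty$ as $|x|\to\infty$ and the extrema of $\varphi$ alternate along $\R$, so the number of real minima of $\varphi$ equals one plus the number of real maxima. Hence $\varphi$ has exactly two real minima if and only if $P$ has three distinct real roots.

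First, I would invert the critical-point equation $P(x;\gamma)=0$, which is affine in $\gamma$, to express $\gamma$ as a rational function of $x$:
$$\gamma\,=\,g(x)\,:=\,-\,\frac{(x+1)\,\bigl[(x-1)^2+\beta_2^2\bigr]}{(x-1)\,\bigl[(x+1)^2+\beta_1^2\bigr]}\,.$$
The requirement $\gamma>0$ forces $x\in(-1,1)$, so every real critical point of $\varphi$ lies in this open interval. The map $g\colon(-1,1)\to(0,+\infty)$ is continuous with $g(-1^+)=0$ and $g(1^-)=+\infty$, hence surjective, and the number of real critical points of $\varphi$ for a given $\gamma>0$ equals $|g^{-1}(\gamma)|$. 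Since $P$ is a cubic in $x$, this cardinality is at most $3$ for every $\gamma$, and combined with the boundary behavior of $g$ this forces the number of local extrema of $g$ on $(-1,1)$ to be either $0$ or $2$. In the first case $g$ is strictly monotone and $\varphi$ has a unique real minimum for every $\gamma>0$. In the second case $g$ has a local maximum at some $x_M$ and a local minimum at some $x_m>x_M$ with $g(x_m)<g(x_M)$, and for $\gamma\in(g(x_m),g(x_M))$ there are three preimages, hence two real minima separated by a maximum. Setting $\widetilde{\Gamma}_1:=g(x_m)$ and $\widetilde{\Gamma}_2:=g(x_M)$ delivers the asserted interval.

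Next, I would identify the curve separating the two regimes. The transition occurs when the two extrema of $g$ collide into a single inflection with horizontal tangent, i.e.\ $g'(x_0)=g''(x_0)=0$ for some $x_0\in(-1,1)$. Implicit differentiation of $P(x;g(x))\equiv 0$ shows that this is equivalent to $P(\,\cdot\,;\gamma_0)$ possessing a triple real root at $x_0$, that is, $P=\partial_x P=\partial_x^2 P=0$ at $(x_0,\gamma_0)$ (cf.\ Remark~\ref{rem:recipe}). Since $\partial_x^2 P$ and $\partial_x P$ are both linear in $\gamma$, one can solve $\partial_x^2 P=0$ for $\gamma$ in terms of $x$, substitute into $\partial_x P=0$ to obtain a polynomial equation in $x$ alone (with coefficients in $\beta_1^2,\beta_2^2$), and finally use $P=0$ to eliminate $x$. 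The resulting condition on $(\beta_1^2,\beta_2^2)$ is, up to a nonvanishing factor, precisely $f(\beta_1^2,\beta_2^2)=0$ from \eqref{boundary}. The orientation ($f<0$ on $\Omega_0$, $f\geq 0$ on $\Omega_\infty\cup\mathcal{C}$) is fixed by evaluating $f$ at two test points, e.g.\ $f(1/4,1/4)<0$ (non-monotone $g$) and $f(4,4)>0$ (monotone $g$), and the asymptotes $\beta_1=\beta_2=2/(3\sqrt{3})$ of $\mathcal{C}$ emerge from the leading behavior $f(x,y)/y^3\to 27x-4$ as $y\to+\infty$ (and symmetrically in $x$).

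The main obstacle is the explicit algebraic elimination yielding $f$: conceptually it is simply the resultant of a triangular polynomial system in $(x_0,\gamma_0)$ over the parameters $(\beta_1^2,\beta_2^2)$, but matching the eliminant exactly to the specific polynomial \eqref{boundary} is a heavy symbolic computation, best verified with a computer algebra system. Once this identification is in hand, the monotonicity dichotomy for $g$ yields both items of Theorem \ref{thm:2M} at once.
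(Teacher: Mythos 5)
Your proposal inverts the cubic critical-point equation to study the graph of $\gamma = g(x) = -u(x)/v(x)$ on $(-1,1)$, whereas the paper works in the ``dual'' direction: it writes $P(x,\gamma)=u(x)+\gamma v(x)$ (the same decomposition you use) and tracks the motion of the roots of $P$ as $\gamma$ increases, using $\partial P/\partial\gamma=v<0$ on $(-1,1)$ and the explicit formula $\tfrac13\tfrac{1-\gamma}{1+\gamma}$ for the arithmetic mean of the three roots. Your inverse-function picture is a genuinely different and arguably cleaner packaging of the same phenomenon, and the reduction of the boundary curve $\mathcal{C}$ to a triple-root elimination matches the paper's Remark~\ref{rem:recipe}, as does your asymptote computation.

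However, the pivotal dichotomy is not established by the argument you give. You assert that the bound $|g^{-1}(\gamma)|\le 3$ together with $g(-1^+)=0$, $g(1^-)=+\infty$ ``forces'' $g$ to have exactly $0$ or $2$ local extrema. That inference is false: a configuration with four extrema, MAX $M_1$, MIN $m_1$, MAX $M_2$, MIN $m_2$ with the value-intervals $(m_1,M_1)$ and $(m_2,M_2)$ disjoint, is perfectly compatible with every horizontal line meeting the graph at most three times, yet it would produce \emph{two} disjoint $\gamma$-intervals with two minima and so contradict the single interval $(\widetilde\Gamma_1,\widetilde\Gamma_2)$ claimed in the theorem. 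The correct way to close this gap inside your own framework is to look directly at the numerator $q:=u'v-uv'$ of $-g'$: it is a quartic with leading coefficient $+2$ and $q(-1)=-2\beta_1^2(4+\beta_2^2)<0$, $q(1)=-2\beta_2^2(4+\beta_1^2)<0$. Hence $q$ has an odd (in particular $\ge 1$) number of real roots in each of $(-\infty,-1)$ and $(1,+\infty)$, leaving at most two in $(-1,1)$, while the equal signs $q(\pm1)<0$ force the count inside to be even; so it is $0$ or $2$. With that lemma supplied, your proof is complete and matches the theorem; as written, the dichotomy is a gap.
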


\begin{remark}\label{rem:recipemin}
In a similar fashion as in the previous Theorem 3.1, a simple recipe to compute the critical values $\widetilde{\Gamma}_1$ and $\widetilde{\Gamma}_2$ is feasible. In fact, it is enough to compute suitable values of $\gamma$ in order to make the external field having a double critical point.
\end{remark}

Through the fact that the external field has two real minima is a sufficient condition for the existence of a two-cut phase, it is not a necessary one. Indeed, roughly speaking, if the external field is ``sufficiently non-convex'' is still feasible a two-cut phase. In order to get it, it is necessary the appearance of a new local extrema of \eqref{equilibrium}, that is, a double real root of polynomial $B$ outside the support, as it was said in Section 2. The border external fields in this sense are those for which a type III singularity (as defined above), that is, the confluence of a couple of imaginary zeros of $B$ with a zero of $A$ in the $AB$-representation given by \eqref{Cauchytr}, takes place for some critical value of $t$. Indeed, we have

\begin{theorem}\label{thm:typeIII}
For the equilibrium measure $\lambda_t$ in the external field \eqref{couple}, it holds:

\begin{itemize}
\item If $(\beta_1,\beta_2) \in \Omega_0\,,$ there exists two values $\Gamma_1, \Gamma_2\,,$ with $0<\Gamma_1=\Gamma_1(\beta_1,\beta_2)<\widetilde{\Gamma}_1<\widetilde{\Gamma}_2<\Gamma_2 = \Gamma_2(\beta_1,\beta_2)\,,$ such that for $\gamma = \Gamma_i\,,\,i=1,2,$ a type III singularity occurs at certain critical values of $t \in (0,T)$.
\item If $(\beta_1,\beta_2)\in \Omega_{\infty} \,\cup \,\mathcal{C}$, no type III singularity takes place.
\end{itemize}

\end{theorem}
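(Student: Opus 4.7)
The plan is to encode a type III singularity as algebraic conditions on the $AB$-representation \eqref{Cauchytr} and then locate the critical $\gamma$'s by a continuity and bifurcation argument. At such a singularity at time $\tau$, the support is still a single interval $[a_1,a_2]$, so $\deg A=\deg B=2$ with $A(z)=(z-a_1)(z-a_2)$ and $B(z)=(z-e)^2$ for some endpoint $e\in\{a_1,a_2\}$. Substituting these forms into the residue identities \eqref{residues} yields two complex equations, equivalently four real equations, in the four real unknowns $(a_1,a_2,\tau,\gamma)$ for fixed $(\beta_1,\beta_2)$. Eliminating $a_1,a_2,\tau$ should then reduce the type III locus to a single algebraic equation in $\gamma$, whose positive real solutions would be the $\Gamma_i$ of the statement.

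For the existence part when $(\beta_1,\beta_2)\in\Omega_0$, I would exploit the fact that type III is the limiting case of the \emph{special situation} described at the end of Section 2, where the collision point of a conjugate pair of $B$-roots reaches exactly an endpoint of the support. By Theorem \ref{thm:2M}, for $\gamma\in(\widetilde{\Gamma}_1,\widetilde{\Gamma}_2)$ the external field has two real minima, so by the initial-condition analysis preceding Theorem \ref{thm:end} the support $S_t$ is two-cut already as $t\searrow 0$. I would follow this two-cut regime as $\gamma$ decreases through $\widetilde{\Gamma}_1$: the two minima coalesce, and for $\gamma$ slightly below $\widetilde{\Gamma}_1$ the support starts one-cut. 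By continuity of $\lambda_t$ in $(t,\gamma)$ and the dynamical system \eqref{dynamics1}, a two-cut phase still opens at some intermediate $t$ through the special situation, a conjugate pair of zeros of $B$ reaching the real axis outside $[a_1,a_2]$, splitting into two real double zeros, one of which triggers a subsequent type I birth of a cut. As $\gamma$ is decreased further, the collision point migrates toward an endpoint of $A$, and at $\gamma=\Gamma_1$ it meets that endpoint exactly, which is precisely a type III singularity. Below $\Gamma_1$ the conjugate pair never reaches the real axis, so $S_t$ remains one-cut. The symmetric argument with $\gamma$ increasing from $\widetilde{\Gamma}_2$ produces $\Gamma_2>\widetilde{\Gamma}_2$.

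For the non-existence part when $(\beta_1,\beta_2)\in\Omega_{\infty}\cup\mathcal{C}$, by Theorem \ref{thm:2M} the external field has a single real minimum for every $\gamma>0$. The goal is to show that the complex conjugate pair of $B$-roots stays uniformly bounded away from the real axis in $(\gamma,t)$, thereby precluding both the special situation and its type III limit. Concretely, I would return to the four-equation system of the first paragraph, view its real solutions as an algebraic variety in $(\gamma,\tau,a_1,a_2)$-space parametrised by $(\beta_1,\beta_2)$, and verify that this variety is empty whenever $f(\beta_1^2,\beta_2^2)\geq 0$. The link with $f$ comes from Remark \ref{rem:recipe}: the curve $\mathcal{C}$ is exactly the locus where $\varphi'$ has a triple real root, which is the same degeneracy (up to its $\gamma$-dependent counterpart) that forces a type III collision configuration.

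The main technical obstacle is proving the strict inequalities $\Gamma_1<\widetilde{\Gamma}_1<\widetilde{\Gamma}_2<\Gamma_2$, i.e., that a two-cut phase persists slightly beyond the range $(\widetilde{\Gamma}_1,\widetilde{\Gamma}_2)$ where $\varphi$ has two real minima. I expect this to follow from a transversality argument at $\gamma=\widetilde{\Gamma}_i$: at that value the three real critical points of the cubic \eqref{dphi} collapse to two (a double and a simple), and one must show that the associated equilibrium configuration still generates a genuine two-cut phase for $\gamma$ in a one-sided neighbourhood, using that the right-hand sides of \eqref{dynamics1} remain regular across the bifurcation. A secondary subtlety is ensuring that exactly two positive real $\gamma$-roots of the eliminant occur in $\Omega_0$; this should reflect the binary choice $e=a_1$ versus $e=a_2$ for the endpoint absorbing the colliding conjugate pair.
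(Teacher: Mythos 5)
Your overall framing is aligned with the paper: the proof does indeed reduce the type III question to tracking, for fixed $(\beta_1,\beta_2)$, the one-parameter family of configurations in which $B$ has a multiple real root $b$ outside the support, using the $AB$-representation \eqref{Cauchytr} and the residue identities \eqref{residues}, and the two critical values $\Gamma_1, \Gamma_2$ do correspond to the two sides ($a_1\le a_2\le b$ versus $b\le a_1\le a_2$) on which $b$ sits. But the decisive step is missing, and ``continuity'' alone cannot substitute for it. The whole argument hinges on showing that on the maximal $\gamma$-interval $I$ where $B$ has a multiple real root (outside the support), the functions $a_1(\gamma)$, $a_2(\gamma)$, $b(\gamma)$ are \emph{monotone} with signs $\dot a_1>0$, $\dot a_2<0$, $\dot b>0$ (in the case $a_1\le a_2\le b$), so that as $\gamma$ decreases from $\widetilde\Gamma_1$ the double root $b$ and the endpoint $a_2$ actually \emph{approach} one another, and the collision (a type III singularity) is inevitable rather than merely possible. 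The paper obtains this by differentiating the $AB$-representation with respect to $\gamma$ (analogously to \eqref{parammass}) and to $t$ (via \eqref{BuyRakhrat}) simultaneously along the curve $t=t(\gamma)$ of double-root configurations, producing explicit ODEs $\dot a_i$, $\dot b$ driven by an auxiliary quadratic $L(z)$, and then determining the sign of $L$ at the relevant points through a chain of argument-geometry lemmas (the sums-of-arguments Lemma~\ref{lem:cotsuma}, the double-root identities \eqref{eqsargs1}--\eqref{eqsargs2}, and Lemma~\ref{lem:position} locating $b$ and $(a_1+a_2)/2$ in $(-1,1)$). None of this appears in your outline, so the claim that ``the collision point migrates toward an endpoint of $A$'' remains an assertion, and compactness of $I$ with $\partial I=\{\Gamma_1,\widetilde\Gamma_1\}$ is not established.

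Two further points. First, your non-existence argument (``verify that the variety is empty whenever $f\ge 0$'') is not carried out; in the paper this direction falls out of Corollary~\ref{cor:equivalence}, whose proof depends on the monotonicity of Proposition~\ref{prop:main} together with Theorem~\ref{thm:2M}: if a multiple root configuration existed for some $(\gamma,t)$, following it in $\gamma$ (inside the compact $I$) would necessarily reach $a_1=a_2$ at $t=0$, forcing two real minima of $\varphi$ and hence $(\beta_1,\beta_2)\in\Omega_0$. Without the monotonicity, this implication is unavailable, and the brute-force elimination you sketch is not obviously tractable. Second, the transversality argument you envisage at $\gamma=\widetilde\Gamma_i$ to get strict inequalities is in fact not needed in the paper's proof: the strictness $\Gamma_1<\widetilde\Gamma_1$ comes directly from the strict inequality $b<\ell$ (the second root of $L$), which is itself established through the argument lemmas and shows that $b$ and $\ell$ can never collide, forcing the terminal collision to be $a_2=b$ (type III) on one side and $a_1=a_2$ ($t=0$, two minima) on the other. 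So the heart of the proof is the ODE-plus-argument analysis, and that is precisely what your proposal leaves out.
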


\begin{remark}\label{rem:signderiv}

In the situations discussed in Theorem \ref{thm:typeIII}, \eqref{couple} has a single minimum and for $t$ sufficiently small we have a one-cut phase where polynomial $B$ has a couple of imaginary roots $b_1 = b$ and $b_2 = \overline{b}$. The sign of $\displaystyle \frac{\partial \Im b}{\partial t}$ plays a central role in the description of the dynamics of the support. In this sense, it is possible that $\Im b$ is always increasing and, thus, $b$ and $\overline{b}$ are always going away from the real axis, or $\Im b$ could be initially increasing but becomes decreasing at a certain moment, and so on. In this sense, for each value of $t$ there is a critical curve such that if $b = b(t)$ belongs to this curve, then $\displaystyle \frac{\partial \Im b}{\partial t} = 0\,.$ In previous \cite{MOR2015} and \cite{OrSL2015}, this curve takes the form of a hyperbola and a circle, respectively. In the present case, its shape is much more involved. Indeed, we have by \eqref{dynamics1},
\begin{equation*}\label{signderiv}
\frac{\partial \Im b}{\partial t}\,<\,0\;\Leftrightarrow\;\Re \left(\frac{D(b)}{A(b)}\right)\,>\,0\;\Leftrightarrow\;\Re D(b) \Re A(b)\,+\,\Im D(b) \Im A(b)\,>\,0\,,
\end{equation*}
where $A(b) = (b-a_1)(b-a_2)$ and $D(b) = (b^2-z_1^2)(b^2-z_2^2)$. Thus, in this case the critical curve is given in terms of a bivariate polynomial of degree $6$ in $x = \Re b$ and $y = \Im b$.

Therefore, now the geometry of the problem is much more involved. Furthermore, it is easy to check that, in this situation, while the support plays the role of a ``repellent'' for the couple of conjugate roots $b$ and $\overline{b}$, the rest of the real line acts as an attractor. Otherwise, for a general rational external field, the support always repels the couple of imaginary roots, while each gap is split in an odd number of subintervals by the roots of $F$ and $B$ in such a way that the first one acts as an ``attractor'', the second one as a ``repellent'', and so on (of course, it is also necessary to take also into account the multiplicity of each zero).

\end{remark}

Now, we have all the ingredients for describing the evolution of the equilibrium measure and, especially, its support $S_t$ when $t$ grows from $0$ to $T$. Our main result, Theorem \ref{thm:main}, is a simplified version of the following result.

\begin{theorem}\label{thm:dynamics}
Let $\beta_1,\beta_2 > 0$, and $0<\Gamma_1<\widetilde{\Gamma}_1<\widetilde{\Gamma}_2<\Gamma_2\,,$ as given in Theorems \ref{thm:2M}-\ref{thm:typeIII}. Then,
\begin{itemize}
\item [(a)] If $(\beta_1,\beta_2) \in \Omega_0\,$ and $\gamma \in (\widetilde{\Gamma}_1, \widetilde{\Gamma}_2)\,,$ we have the following phase diagram for the support of the equilibrium measure, $S_t$:

\textbf{one-cut}, for $\,t\in(0,T_1)\;\longrightarrow\;$ \textbf{two-cut}, for $\,t\in(T_1,T_2)\;\longrightarrow\;$
\textbf{one-cut}, for $\,t\in(T_2,T)\,.$

At $t=T_1$ ($t=T_2$), a type I (respect., type II) singularity occurs. If the external field $\varphi$ takes the same value in its two relative minima, then $T_1 = 0$ in the phase diagram below and the initial one-cut phase is absent.

\item [(b)] If $(\beta_1,\beta_2) \in \Omega_0\,$ and $\gamma \in (\Gamma_1, \widetilde{\Gamma}_1] \cup [\widetilde{\Gamma}_2, \Gamma_2)\,,$ the phase diagram for $S_t$ is the same as in (a), but the appearance of a pair of new local extrema occurs at a certain $t=T_0$, with $T_0 < T_1$.

\item [(c)] If $(\beta_1,\beta_2) \in \Omega_0\,$ and $\gamma \in (0,\,\Gamma_1] \cup [\Gamma_2,\,\infty)\,,$ \textbf{one-cut} phase holds for any $t\in (0,T)\,$.

\item [(d)] If $(\beta_1,\beta_2) \in \Omega_{\infty} \,\cup \,\mathcal{C}\,,$ we have \textbf{one-cut} phase for any $\gamma, t \in (0,T)\,.$

\end{itemize}

\end{theorem}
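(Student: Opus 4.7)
\noindent\emph{Proof plan.} My plan is to compare three pieces of information for each parameter regime: (i) the initial support at $t\searrow 0$, concentrated at the real minima of $\varphi$ classified by Theorem \ref{thm:2M}; (ii) the terminal support at $t\nearrow T$, which Theorem \ref{thm:end} forces to be a single interval whose endpoints diverge; and (iii) the continuous evolution in between, governed by the dynamical system of Theorem \ref{dynamicalsystem} specialised via \eqref{dynamics1}--\eqref{dynamics2}. Because $q=2$, the a priori bound $k\le 2$ restricts the admissible transitions to those enumerated in Section~2: type~I (birth of a cut), type~II (fusion), type~III, and the ``birth of a pair of new real extrema of the chemical potential'' described right after them. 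Counting and ordering these transitions in each case will yield the four claimed phase diagrams.

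For case (a) I would proceed as follows. Since $(\beta_1,\beta_2)\in\Omega_0$ and $\gamma\in(\widetilde{\Gamma}_1,\widetilde{\Gamma}_2)$, Theorem \ref{thm:2M} supplies two distinct real minima $m_1,m_2$ of $\varphi$. First I check that at $t=0^+$ the support is an arbitrarily small interval around $m_1$ (assuming $\varphi(m_1)<\varphi(m_2)$), which puts us in the one-cut regime \eqref{dynamics1}. Then, by continuity in $t$ of $c_t$ and $V^{\lambda_t}$, the quantity $(V^{\lambda_t}+\varphi)(m_2)-c_t$ starts positive and must vanish at a unique time $T_1\in(0,T)$, producing a type~I singularity. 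A two-cut phase follows, and Theorem \ref{thm:end} forces the support to be one-cut again at $t=T$; the only way to reconnect is through a type~II event at some $T_2\in(T_1,T)$. Monotonicity of the right-hand sides in \eqref{dynamics1}--\eqref{dynamics2} between singular times, together with the bound $k\le 2$, will rule out any additional transitions. If $\varphi(m_1)=\varphi(m_2)$, the same argument applied to the symmetric initial datum gives the initial two-cut phase and $T_1=0$.

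For case (b), Theorem \ref{thm:2M} gives a unique real minimum, so $B$ has conjugate complex roots $b,\overline b$ near $t=0$ and the initial phase is one-cut. The key point I would establish is that the choice $\gamma\in(\Gamma_1,\widetilde{\Gamma}_1]\cup[\widetilde{\Gamma}_2,\Gamma_2)$, which lies strictly between the values producing type~III singularities in Theorem \ref{thm:typeIII}, forces the trajectory of $b(t)$ to reach the real axis at some time $T_0\in(0,T)$ without colliding with an endpoint of $A$. For this I would use the sign criterion for $\partial\Im b/\partial t$ from Remark \ref{rem:signderiv} together with the location of $\gamma$ relative to the type~III thresholds. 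At $T_0$ the conjugate roots merge into a double real root of $B$ that immediately splits into two double real roots, giving the two new local extrema of the chemical potential; the subsequent ordering type~I $\to$ type~II is then proved exactly as in case (a).

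Cases (c) and (d) will be uniform corollaries of the preceding analysis. In both, Theorem \ref{thm:2M} yields a unique minimum of $\varphi$ (ruling out a type~I event triggered by a second minimum) and Theorem \ref{thm:typeIII} forbids any type~III singularity; moreover $\gamma$ lies in the complement of the interval where the imaginary roots of $B$ can descend to the real axis, so no new extrema are born either. Hence polynomial $B$ keeps its pair of complex conjugate zeros throughout, and the one-cut phase persists for all $t\in(0,T)$. The hardest step in the whole argument will be case (b): controlling the trajectory of $b(t)$ globally on the degree-6 critical curve of Remark \ref{rem:signderiv}, so as to show both that the collision on the real axis occurs at a unique $T_0$ strictly before any type~I event and that the resulting double real root genuinely bifurcates in the stated way. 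This is precisely the step where the distinction between $(\Gamma_1,\widetilde{\Gamma}_1]$ and $(0,\Gamma_1]$ (and symmetrically at the upper end) becomes decisive.
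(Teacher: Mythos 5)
Your proposal reproduces the broad skeleton of the paper's argument (compare the initial configuration from Theorem~\ref{thm:2M}, the terminal one-cut state from Theorem~\ref{thm:end}, and the local dynamics of Theorem~\ref{dynamicalsystem}), and your opening analysis of case~(a) up through the occurrence of a type~I then a type~II transition is essentially the paper's. However, there is a genuine gap at the two points you flag as ``will rule out'' and ``the hardest step'': you never invoke Proposition~\ref{prop:main}, and that proposition is precisely what the paper uses to make both of these steps rigorous. To exclude a second two-cut phase after $T_2$ in case~(a), the paper does not rely on any ``monotonicity of the right-hand sides of \eqref{dynamics1}'' (there is no such usable monotonicity in $t$ across a singular time); instead it argues by contradiction in the $\gamma$-direction: a hypothetical later re-merger of the conjugate roots of $B$ would, by the continuity and monotonicity in $\gamma$ established in Proposition~\ref{prop:main}, be continuable in $\gamma$ down to a configuration where the merge time coincides with a collision of $a_1$ and $a_2$ at $t=0$, an absurdity. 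That argument cannot be replaced by $k\le 2$ alone, which is perfectly compatible with a sequence one-cut $\to$ two-cut $\to$ one-cut $\to$ two-cut $\to$ one-cut.

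Likewise, in cases~(b) and~(c) you propose to control the trajectory $t\mapsto b(t)$ via the sign criterion of Remark~\ref{rem:signderiv} on a degree-6 critical curve, and you correctly note this is the hard part; but the paper deliberately avoids that route. Proposition~\ref{prop:main} already gives, for each $(\beta_1,\beta_2)\in\Omega_0$, the exact interval $I=[\Gamma_1,\widetilde\Gamma_1]$ (and symmetrically $[\widetilde\Gamma_2,\Gamma_2]$) of charges $\gamma$ for which $B$ acquires a multiple real root outside the support at some $t=t(\gamma)$, with the endpoint $\gamma=\Gamma_i$ being the type~III confluence. Case~(b) is then immediate (the multiple real root of $B$ is the new local extremum of the chemical potential, born at $T_0=t(\gamma)$), and case~(c) is immediate by complementarity (for $\gamma\notin[\Gamma_1,\Gamma_2]$ no such multiple root ever occurs, so $b,\overline b$ stay in $\C\setminus\R$ throughout). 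In short, your plan identifies the right ingredients but replaces the paper's decisive $\gamma$-continuation argument (Proposition~\ref{prop:main} together with Corollary~\ref{cor:equivalence}) with a $t$-trajectory analysis that is not carried out and would be substantially harder to complete; as written, cases~(b), (c) and the uniqueness of the transitions in case~(a) are not established.
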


\begin{remark}\label{rem:totsym}
The so-called totally symmetric case studied in \cite{OrGa2010}, that is, when $z_2 = -\overline{z}_1$ and $\gamma = 1$ (equal heights and masses), may be now revisited in the light of results in Theorem \ref{thm:dynamics}. In this case, the symmetry of the external field is inherited by the support, what means that  $a_1 = -a_2 = -a\,$ and $b = 0\,$ when type II transition (fusion of the two cuts) occurs. Hence, \eqref{residues} yields
\begin{equation*}\label{systemTtotsym}
(2 -T_2)^2\,z_1^4\,(z_1^2-a^2)\,+\,16\,(\Im z_1)^2\,(\Re z_1)^2\,z_1\, =\,0\,,
\end{equation*}
and thus, the following system of nonlinear equations arises, with $T_2$ and $a$ as unknowns (of course, we are looking for solutions for which $T_2 < T = 2$),
\begin{equation}\label{totsymsyst}
\begin{cases} K\,\left((1-\beta^2)(1-\beta^2-a^2)-4\beta^2\right)\,+\,16\beta^2 = 0\,, \\ 2(1-\beta^2)-a^2 = 0\,, \end{cases}
\end{equation}
where $K = (2-T_2)^2 >0\,.$ From the second identity in \eqref{totsymsyst}, it is clear that necessarily $\beta < 1$. Under this condition, it is easy to check that the fusion of cuts takes place for
$$T_2 = 2\,\frac{(1-\beta)^2}{1+\beta^2}\,<\,2\,.$$ Finally, for $t\in (T_2,2)$, the one--cut phase takes place and Theorem 2.2 implies, for the density $\lambda'_t$ of the equilibrium measure, that
$$\lim_{t\rightarrow 2}\,\lambda'_t(x) = \,\frac{1}{\pi}\,\frac{x^2+\beta^2+1}{D(x)^2}\,,$$
with $D(x) = ((x+1)^2+\beta^2)\,((x-1)^2+\beta^2)$.

Thus, in the totally symmetric case, when $\beta < 1$, we always have the phase diagram:

\begin{center}
\textbf{two-cut} ($0<t<T_2$) $\;\longrightarrow\;$ \textbf{one-cut} ($T_2\leq t<T=2$)
\end{center}

On the other hand, when $\beta \geq 1$, it is easy to check that $\varphi$ only has a real critical point, at $x=0$, where it attains its absolute minimum. Therefore, the support $S_t$ starts being of the form $S_t = [-a(t),a(t)]\,,$ with $a(t)$ an increasing function as above. No phase transition occurs, since it would imply by symmetry a three-cut situation, which is not possible.

The reader can check that the conclusions above agree with the results in \cite{OrGa2010}.
\end{remark}

\section{Proofs}

Throughout this section the proofs of Theorems 2.2 and 3.2--3.4 above will be displayed. As it was said, they all together render the proof of the main result Theorem 3.1 and enrich it with auxiliary results which are of interest themselves.

\subsection{Proof of Theorem 2.2}

 As it was shown in \cite[Theorem 2 (3)]{Buyarov/Rakhmanov:99}, we have that $\mu_t$ is increasing and continuous in the weak topology of the set of measures with compact support in $\R$. In addition, for any Borel set $I\subset \R$, it holds $\mu_t(I)\leq \mu_t(\R) = t \in (0,T)\,$ and, thus, there exists $\displaystyle \lim_{t\rightarrow T}\mu_{t}$ in the sense mentioned above.

Now, let us show that $\displaystyle \lim_{t\rightarrow T} S_{t}\,=\,\R\,.$ For this, consider the function $$\phi_t(x) = V^{\mu_t}(x) + \varphi(x) - c_t\,,\;t\in (0,T)\,,\;x\in \R\,,$$ where $c_t$ is the extremal constant given by \eqref{equilibrium}. It is clear that $\phi_t(x)\geq 0\,,x\in \R$ and $t\in (0,T)$. Let $\tau \in (0,T)$ fixed. If $x\in S_{\tau}$, then $\phi_{\tau}(x) = 0$ and since the family of supports $\{S_t\}$ is increasing (\cite[Theorem 2, (1)]{Buyarov/Rakhmanov:99}), we have that $\phi_{t}(x) = 0, t \geq \tau\,.$ On the other hand, if $x\in \R \setminus S_{\tau}$, \cite[(1.8) and (1.13)]{Buyarov/Rakhmanov:99} yield,
$$\frac{\partial \phi_t(x)}{\partial t}|_{t=\tau^-}\,=\,-g_{\tau}(x)\,<\,0\,,\;\;\frac{\partial \phi_t(x)}{\partial t}|_{t=\tau^+}\,=\,-g^{\tau}(x)\,<\,0\,,$$
where $g_{\tau}$ ($g^{\tau}$) denotes de Green function of $\R \setminus S_{\tau}$ (respect., $\R \setminus S^{\tau}$) with pole at infinity, and
$\displaystyle S^{\tau} = \{x\in \R : \phi_{\tau}(x)= 0\}\,\supseteq \,S_{\tau}$. Hence, we have that $\phi_t(x)\geq 0\,,$ for any $x\in \R$ and $t\in (0,+\infty)$, and that $\phi_t(x)$ is a decreasing function of $t$ for any fixed $x\in \R$. This shows that there exists $$\lim_{t\rightarrow T}\,\phi_t(x) = \phi_T(x)\,,\,x\in \R\,.$$ Now, let us see that $\phi_T\,\equiv 0\,.$ To do it, recall that for $x\in S_t$, we have that $ V^{\mu_t}(x) + \varphi(x) = c_t$ and, thus,
\begin{equation}\label{decomposition}
V^{\lambda_t}(x) = -\,\frac{t}{T}\,\varphi(x)-\,\frac{T-t}{T}\,\varphi(x) + c_t = V^{\nu_t}(x)-\,\frac{T-t}{T}\,\varphi(x) + c_t\,,
\end{equation}
where $\displaystyle \nu_t=\sum_{j=1}^{N}\frac{t\gamma_{j}}{T} \delta_{z_j}\,,\;\;\text{with}\;\; \nu_t(\mathbb{C})=t$. Since $\varphi$ has an absolute minimum on the real axis, $\displaystyle m = \min_{x\in \R}\,\varphi (x) > \sum_{j=1}^q\,\gamma_j\,\Im z_j\,>\,-\infty\,,$ then \eqref{decomposition} implies that $$V^{\lambda_t}(x)\leq V^{\nu_t}(x)-\,\frac{T-t}{T}\,m + c_t\,,\,x\in S_t\,.$$
Thus, the Domination Principle \cite[Theorem II.3.2]{Saff:97} asserts that this inequality holds for any real $x$ (and, in fact, for any complex $x$).
Thus, $$\phi_t(x)\leq \,\frac{T-t}{T}\,(\varphi (x) - m)\,,\,x\in \R\,,$$ which shows that $$\lim_{t\rightarrow T}\,\phi_t(x) = 0\,,\,x\in \R\,$$ and, then, that $\displaystyle S^T = \R$.

Now, we are going to prove the rest of the results. First, we are dealing with the limit function $\displaystyle \widehat{\mu}_T = \lim_{t\rightarrow T} \widehat{\mu}_t\,.$
Taking into account \eqref{Cauchytr} and the previous analysis, we have that $\displaystyle \widehat{\mu}_T$ must be analytic on $\displaystyle\mathbb{C}\setminus(\mathbb{R}\cup\bigcup_{j=1}^N\{z_j,\overline{z}_j\})\,,$ in such a way that

\begin{itemize}
\item $\widehat{\mu}_T (x^+)=-\widehat{\mu}_T (x^-)\;$ for   $\;x\in\mathbb{R}$,
\item $\widehat{\mu}_T (x^+)\in\mathbb{R}^+i\,,\;$ taking into account the positivity of the measure,
\item For $z\to z_j\,,$ $$\widehat{\mu}_T (z)=\frac{\gamma_j}{2(z-z_j)}+O(1)$$
\item For $z\to \overline{z}_j\,,$
$$\widehat{\mu}_T (z)=\frac{\gamma_j}{2(z-\overline{z}_j)}+O(1)\,.$$
\end{itemize}

Therefore, having in mind the Liouville Theorem and some immediate consequences, we have that

$$\widehat{\mu}_T (z) = \begin{cases}  \sum_{j=1}^N\left(\frac{\gamma_j}{2(z-z_j)}-
\frac{\gamma_j}{2(z-\overline{z}_j)}\right)\,,\Im z>0\,,\\
-\sum_{j=1}^N\left(\frac{\gamma_j}{2(z-z_j)}-
\frac{\gamma_j}{2(z-\overline{z}_j)}\right)\,, \Im z<0\,. \end{cases}$$

Now, the conclusions easily follow.

\subsection{Proof of Theorem 3.2}

Polynomial $P$ in \eqref{dphi} may be rewritten in the form
\begin{equation}\label{decomposition2}
P(x) = P(x,\gamma) = (x+1)\left((x-1)^2+\beta_2^2\right)\,+\,\gamma\,(x-1)\left((x+1)^2+\beta_1^2\right)\,=\,u(x)\,+\,\gamma\,v(x)\,.
\end{equation}
Let us study the zeros of \eqref{decomposition2} when $\gamma$ increases. For $\gamma = 0$, $P$ has a single real zero at $x=-1$ and a couple of conjugate imaginary zeros at $z_2$ and $\overline{z}_2$, while when $\gamma$ tends to infinity, the real zero approaches $x=1$ and the couple of imaginary zeros tend to $z_1$ and $\overline{z}_1$. Let us start with $\gamma > 0$ small enough. Since
\begin{equation}\label{decreasing}
\displaystyle \frac{\partial P}{\partial \gamma}\, = \, v(x) \, <\,0 \,,\;\;\text{for}\;\;x<1\,,
\end{equation}
the real zero, say $\zeta_1$, move to the right as $\gamma$ increases. On the other hand, writing \eqref{decomposition2} in powers of $x$ yields that arithmetic mean of the zeros of $P$ equals $\displaystyle \frac{1}{3}\,\frac{1-\gamma}{1+\gamma}$  and, thus, this mean decreases as $\gamma$ increases, which means that the real parts of the couple of imaginary roots $\xi,\overline{\xi}$, move to the left. It implies there are just two possible scenarios for the evolution of the critical points of $\varphi$ as $\gamma$ travels across $(0,+\infty)\,.$
\begin{itemize}
\item The pair of imaginary roots never reach the real axis. In such a case, $\varphi$ has a single minimum for any $\gamma >0\,.$
\item There exist a real number $\widetilde{\Gamma}_1 > 0\,,$ such that for $\gamma = \widetilde{\Gamma}_1$ the pair of imaginary roots reach the real axis, giving birth to a double real root $\xi$ located to the right of the simple real root $\zeta_1$ (this is due to the fact that $P(x) < 0$ to the left of this simple real root with decreasing values of $P(x)$ as $\gamma$ increases). Immediately after the collision, a pair of new simple real roots arise, say $\zeta_2,\zeta_3$, in such a way that $-1<\zeta_1<\zeta_2<\xi<\zeta_3<1$ and with $\zeta_3$ moving to the right and $\zeta_2$ to the left (because of \eqref{decreasing}). This situation holds until $\zeta_1$ and $\zeta_2$ collide, creating a double real root for immediately going to $\C \setminus \R$.
\end{itemize}
It is easy to see that the boundary between these possible evolutions is the case where the pair of imaginary roots $\xi,\overline{\xi}$ collide with the real one, $\zeta_1$, giving birth to a triple real root for a certain value of $\gamma$: that is, when $(\beta_1,\beta_2)\in \mathcal{C}$, with $\mathcal{C}$ given by \eqref{curveC}. It is also easy to check that the region where two minima are feasible is $\Omega_0$ (consider, for instance, the case with $\beta_2 = \beta_1$ taking small positive values).

\subsection{Proof of Theorem 3.3}

This is, in fact, the most important theorem in order to prove our main result (Theorem 3.1). Its proof will be a consequence of Proposition \ref{prop:main} below, to whose proof it is devoted the most part of this Section. We know that when the external field has two minima, then a two-cut phase occurs, but this is not the unique way to reach that phase. Indeed, when the external field has a single minimum, the existence of such a phase is equivalent to the birth of a new minimum of the total potential \eqref{equilibrium} in a previous ``instant'' $t$. Proposition 4.1 below analyzes the possible birth of this minimum of \eqref{equilibrium} as $\gamma$ varies.

The setting for the result below is as follows.

Let $(\beta_1,\beta_2)\in (\R^+)^2$ be fixed, and suppose that for a certain $\gamma = \gamma_0$ and $t=t_0$ the polynomial $B$ in the $AB$--representation \eqref{Cauchytr} has a multiple real root (i.e., double or triple), not belonging to the interior of the support. Now, let $I = I(\beta_1,\beta_2)$ the largest interval containing $\gamma_0$ such that $B$ has a multiple real root (outside the interior of the support, too) for some $t=t(\gamma)$. In this setting, let us denote, as usual, by $a_1$ and $a_2$ the endpoints of the support (one--cut) and by $b$ the multiple root of $B$. Then, we have,
\begin{proposition}\label{prop:main}
The interval $I$ is compact and the functions $a_1=a_1(\gamma)$, $a_2=a_2(\gamma)$, $b=b(\gamma)$ (see \eqref{Cauchytr}) and $t=t(\gamma)$ are analytic in the interior of $I$ and continuous in $I$, with $a_1(t)$, $a_2(t)$ and $b(t)$ being monotonic. In particular, using now the notation $\displaystyle \dot{f} = \frac{\partial f}{\partial \gamma}\,,$ it holds,
 \begin{itemize}
 \item If $a_1\leq a_2\leq b\,,$ then $\dot{a}_1>0$, $\dot{a}_2<0$, $\dot{b}>0$ and there exists $\Gamma_1 >0$ such that $I=[\Gamma_1,\tilde{\Gamma}_1]$, in such a way that a type III singularity takes place for $\gamma = \Gamma_1$ and a certain value of $t$.
 \item If $b\leq a_1\leq a_2\,,$ then $\dot{a}_1<0$, $\dot{a}_2>0$, $\dot{b}>0$ and there exists $\Gamma_2 >0$ such that $I=[\tilde{\Gamma_2},\Gamma_2]$, in such a way that a type III singularity occurs for $\gamma = \Gamma_2$ and a certain value of $t$.
 \end{itemize}

\end{proposition}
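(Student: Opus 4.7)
The plan is to apply the implicit function theorem to a finite system of equations that carves out the locus $I$, and then to use the variational formulas of Section 2 (Theorem \ref{parameters} combined with the $AB$-representation \eqref{Cauchytr} and the dynamical formula \eqref{BuyRakhrat}) to extract the monotonicity of the branches and to identify the boundary points.

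For the analyticity and continuity, in the one-cut regime one has $\deg B = 2$, and the requirement that $B$ have a double real root $b$ forces $B(z) = (z-b)^2$. Substituting into \eqref{residues} produces
\begin{equation*}
(T-t)(z_j - b)^2 \sqrt{A(z_j)} = i\gamma_j\,\Im z_j\,(z_j - z_{3-j})(z_j - \overline{z}_{3-j}), \qquad j = 1, 2,
\end{equation*}
with $\gamma_1 = 1$, $\gamma_2 = \gamma$, $T = 1 + \gamma$: four real equations in the four real unknowns $(a_1, a_2, b, t)$ parametrized by $\gamma$. At any interior point of $I$ the $4 \times 4$ Jacobian in $(a_1, a_2, b, t)$ is nonsingular---otherwise an infinitesimal deformation of $(a_1, a_2, b, t)$ would preserve \eqref{residues} with $\gamma$ fixed, contradicting the uniqueness of $\lambda_t$ for the corresponding value of $t$. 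The implicit function theorem then yields real-analytic branches on a maximal open subinterval, which extend continuously to the closure $I$ once compactness is established.

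For the monotonicity I would combine the parametric formula \eqref{parammass} of Theorem \ref{parameters} (specialized to $\tau = \gamma$, for which $\partial \varphi / \partial \gamma = \log|x-z_2|$) with the along-locus identity $d/d\gamma = \partial/\partial \gamma|_t + \dot t \,\partial/\partial t$ and the $t$-derivative formula \eqref{BuyRakhrat}. Since in the one-cut case $F \equiv 1$ in \eqref{BuyRakhrat}, this produces
\begin{equation*}
\frac{d}{d\gamma}\!\left((T-t)\frac{(z-b)^2 \sqrt{A(z)}}{D(z)}\right) = \frac{\widetilde H(z)}{(z-z_2)(z-\overline{z}_2)\sqrt{A(z)}},
\end{equation*}
where $\widetilde H(z) = H(z) - \dot t\,(z-z_2)(z-\overline{z}_2)$ and $H$ is the degree-$2$ polynomial from \eqref{parammass}. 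Clearing denominators and evaluating the resulting polynomial identity at $z = a_1, a_2, b$ expresses $\dot a_1, \dot a_2, \dot b$ as explicit rational expressions in the values of $\widetilde H$ and its derivative at these points. The neutrality $\omega(S_{t,\gamma_0}) = 0$ of Theorem \ref{parameters}, the positivity of $D(x) > 0$ on $\R$, and the relative position of $b$ with respect to the support then pin down the sign pattern of $\widetilde H$ on $\{a_1, a_2, b\}$; one obtains, in Case 1 ($a_1 < a_2 < b$), $\dot a_1 > 0$, $\dot a_2 < 0$, $\dot b > 0$, with Case 2 handled by the symmetric argument with the roles of $a_1$ and $b$ exchanged.

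Finally, the endpoints of $I$ are identified via the monotonicity just obtained. In Case 1, since $\dot b > 0$ and $\dot a_2 < 0$, the gap $b - a_2$ decreases as $\gamma$ decreases, and the branch extends down to the collision $b = a_2$; at that moment the double real zero of $B$ (the limit of a conjugate pair from the neighboring regime) coincides with a zero of $A$, which is precisely the type III singularity of Section 2, locating the left endpoint $\Gamma_1$. As $\gamma$ increases along $I$, the support shrinks and $b$ moves further away, and in the limiting configuration $t \to 0^+$ the support collapses to a single minimum of $\varphi$ while $b$ accumulates at a higher-order critical point of $\varphi$; by the analysis preceding Theorem \ref{thm:2M}, the value of $\gamma$ at which $\varphi$ possesses a triple real critical point is precisely $\widetilde\Gamma_1$, identifying the right endpoint of $I$. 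The same identifications hold in Case 2, and compactness of $I$ follows from the finiteness of both endpoints. The main technical obstacle throughout is the sign analysis for $\dot a_1, \dot a_2, \dot b$: although the linear system for the derivatives is finite-dimensional, extracting the signs rigorously requires the precise localization of the roots of $\widetilde H$ via the neutrality of $\omega$ and the positivity of $D$ on $\R$, combined with the relative position of $b$ with respect to $[a_1, a_2]$; once these signs are in hand, the boundary analysis is essentially topological.
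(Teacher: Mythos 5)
Your overall skeleton is the right one: set up the residue equations \eqref{residues} with $B(z)=(z-b)^2$, combine the $\gamma$-parametric variation \eqref{parammass} with the $t$-derivative \eqref{BuyRakhrat} to form the total derivative along the locus $t=t(\gamma)$, clear denominators to get the polynomial identity \eqref{EDOpolgamma}, and extract $\dot a_1,\dot a_2,\dot b$ by evaluating at $a_1,a_2,b$. That matches the paper's machinery. But the place where you say ``the neutrality $\omega(S_{t,\gamma_0})=0$, the positivity of $D$, and the relative position of $b$ then pin down the sign pattern of $\widetilde H$'' is exactly where the proof actually lives, and what you describe is not enough to close it. The neutrality only says the degree-$2$ monic polynomial $H$ has a root in $[a_1,a_2]$; it gives no control over the second root $\ell$ of $L(z)=H(z)-\dot t\,D_2(z)$, and it is precisely the inequality $\ell>b$ (in the configuration $a_1\leq a_2\leq b$) that makes the three signs come out. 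The paper gets this from a genuinely different and nontrivial ingredient: after taking residues in \eqref{residues} one obtains equalities of arguments modulo $2\pi$ (eqns.\ \eqref{systarg1}--\eqref{systarg2}), and Lemma~\ref{lem:argum} must rule out the extraneous integer branches $k_1=-1$, $k_2=1$ by a separate case analysis using Lemma~\ref{lem:cotsuma}. Then Lemma~\ref{lem:position} localizes $b\in(-1,1)$ and $(a_1+a_2)/2\in(-1,1)$ using the circumference through $z_1,z_2$ and the points $x_0,x_1,x_2$ on $\R$. Only with those argument identities in hand, plus the explicit factorization $L(x)=\Re\bigl(\sqrt{A(z_2)}/(z_2-b)\bigr)(x-b)(x-\ell)$ and an elementary-geometry argument about angle bisectors of the triangle $a_1a_2z_2$, does $\ell>b$ follow. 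Your proposal anticipates none of this; it treats the sign determination as routine bookkeeping, when in fact it is the bulk of Section~4.3.

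Two smaller inaccuracies. First, your identification of the right endpoint is off: at $\gamma=\widetilde\Gamma_1$ what happens is that $t\to 0$, the support collapses to $a_1=a_2$, and $b$ sits at a newly born \emph{double} (not triple) real critical point of $\varphi$ --- i.e., $\varphi$ has two relative minima, which is exactly how $\widetilde\Gamma_1$ is defined in Theorem~\ref{thm:2M}. The triple real critical point occurs only on the boundary curve $\mathcal C$, i.e.\ when $\widetilde\Gamma_1=\widetilde\Gamma_2$. Second, the implicit-function-theorem argument for nondegeneracy of the Jacobian is not justified as stated: the uniqueness of $\lambda_t$ for each fixed $t$ does not by itself preclude a one-parameter family of solutions of \eqref{residues} with $B$ having a double root at fixed $\gamma$; the paper in effect derives smoothness and compactness of $I$ as a byproduct of the explicit monotonicity formulas and the boundary analysis rather than from an a priori IFT argument. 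The reduction of the case $b\leq a_1\leq a_2$ to the first case via $x\mapsto -x$, $\gamma\mapsto 1/\gamma$ --- which the paper uses and which shortcuts the whole second bullet --- is also worth knowing.
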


The following result, which in turn yields Theorem 3.3, is a direct consequence of Proposition 4.1 and Theorem 3.2.

\begin{corollary}\label{cor:equivalence}
Let $(\beta_1,\beta_2)\in (\R^+)^2$ be fixed. Then, the following statements are equivalent:
\begin{itemize}
\item[i)] $(\beta_1,\beta_2)\in \Omega_0$
\item[ii)] There exists $\gamma>0$ such that $\phi'$ has a double root.
\item[iii)] There exist $\gamma>0$ and $t\geq 0$ for which polynomial $B$ in \eqref{Cauchytr} has a multiple real root.
\item[iv)] There exist $\gamma>0$ such that a type III singularity takes place for some $t>0$.
\end{itemize}
Furthermore, if some of these statements holds, there exist exactly two values of $\gamma$ satisfying it.
\end{corollary}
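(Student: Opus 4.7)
The plan is to close the cycle $\mathrm{(i)}\Rightarrow\mathrm{(ii)}\Rightarrow\mathrm{(iii)}\Rightarrow\mathrm{(iv)}\Rightarrow\mathrm{(i)}$, invoking Theorem \ref{thm:2M} for the link between the region $\Omega_0$ and double roots of $\varphi'$, and Proposition \ref{prop:main} for the link between such double roots, multiple real roots of $B$, and type III singularities. The pivotal observation is that the boundary case $t=0$ of the $AB$-representation \eqref{Cauchytr} formally reads $\varphi'(z)=T\,B(z)\sqrt{A(z)}/D(z)$; since in the one-cut regime the support shrinks to the global minimum $a_0$ of $\varphi$ as $t\searrow 0$, the factor $\sqrt{A(z)}$ degenerates to $(z-a_0)$, so the two zeros of $B$ are identified in the limit with the remaining two critical points of $\varphi$.

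First I would dispatch (i) $\Leftrightarrow$ (ii) directly from Theorem \ref{thm:2M} and its proof: $(\beta_1,\beta_2)\in\Omega_0$ is precisely the regime in which the pair of conjugate imaginary zeros of $P(x,\gamma)$ in \eqref{dphi} collides transversally with the real axis as $\gamma$ varies, and each such collision (there being exactly two, at $\widetilde{\Gamma}_1<\widetilde{\Gamma}_2$) produces a genuine double real root of $P$, hence of $\varphi'$. On $\mathcal{C}$ the collision produces only a triple root, and on $\Omega_\infty$ no collision occurs, so no genuine double root of $\varphi'$ arises there. Next, for (ii) $\Rightarrow$ (iii) I would use the $t\searrow 0$ degeneration above: a double real root of $\varphi'$ distinct from $a_0$ maps to a double real root of $B$ in the boundary $t=0$ of the admissible range. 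For (iii) $\Rightarrow$ (iv) I would apply Proposition \ref{prop:main} directly: any admissible pair $(\gamma_0,t_0)$ places $\gamma_0$ in a compact interval $I$ one of whose endpoints is a $\Gamma_j$ at which a type III singularity occurs. Finally, (iv) $\Rightarrow$ (i) follows because a type III event forces $B$ to have a real multiple root, so the same Proposition \ref{prop:main} embeds $\gamma$ in an interval whose other endpoint is a $\widetilde{\Gamma}_j$ from Theorem \ref{thm:2M}, which by the first equivalence forces $(\beta_1,\beta_2)\in\Omega_0$.

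For the \emph{exactly two values} clause, Theorem \ref{thm:2M} supplies exactly the two values $\widetilde{\Gamma}_1,\widetilde{\Gamma}_2$ realizing (ii), while Proposition \ref{prop:main}, applied once in each of the two orderings $a_1\leq a_2\leq b$ and $b\leq a_1\leq a_2$, supplies exactly two compact intervals $I$ with distinct type III endpoints $\Gamma_1,\Gamma_2$ realizing (iv). I expect the main obstacle to be making the boundary limit $t\searrow 0$ in (ii) $\Rightarrow$ (iii) rigorous: even though the equilibrium problem itself is not defined at $t=0$, the analyticity of $a_1(\gamma),a_2(\gamma),b(\gamma),t(\gamma)$ on the interior of $I$ together with the continuity at the endpoints supplied by Proposition \ref{prop:main} let me extend $t(\gamma)\to 0$ continuously at $\widetilde{\Gamma}_j$ and identify the double zero of $B$ with the double critical point of $\varphi$, which is what closes the loop rigorously and also gives the two-value count across (ii) and (iv).
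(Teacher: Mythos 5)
Your proof is correct and fills out the argument the paper leaves implicit — the authors only say the corollary is a ``direct consequence of Proposition 4.1 and Theorem 3.2'' without spelling out the implications, and your cyclic chain (i)$\Leftrightarrow$(ii), (ii)$\Rightarrow$(iii)$\Rightarrow$(iv)$\Rightarrow$(i) via those two results is precisely the intended route.

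Two small points worth tightening. First, for (ii)$\Rightarrow$(iii) you lean on the continuity supplied by Proposition~\ref{prop:main} to extend $t(\gamma)\to0$, but invoking that proposition here is a touch circular since its hypothesis is exactly statement (iii). The cleaner route is to keep $\gamma$ \emph{fixed} at the value where $\varphi'$ has a double root and let $t\searrow 0$: since $\lambda_t(\R)=t\to0$, the left side of \eqref{Cauchytr} converges to $\varphi'$, and $\sqrt{A(z)}\to(z-a_0)$ where $a_0$ is the unique minimum, so $T\,B(z)(z-a_0)=P(z)=T(z-a_0)(z-\xi)^2$ forces $B(z)\to(z-\xi)^2$; no appeal to Proposition~\ref{prop:main} is needed until you want (iii)$\Rightarrow$(iv). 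Second, your handling of the ``exactly two values'' clause is right to single out (ii) and (iv): for (iii) the set of admissible $\gamma$ is the full union $[\Gamma_1,\widetilde\Gamma_1]\cup[\widetilde\Gamma_2,\Gamma_2]$ (a pair of intervals, not a pair of points), so the ``exactly two'' count genuinely applies only to the boundary statements (ii) and (iv), which is how you present it.
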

Therefore, the two-cut phase is feasible when $(\beta_1,\beta_2)\in \Omega_0\,,$ and the number of type III singularities is $2$ at most.

Now, let us proceed with the proof of Proposition 4.1. Since it deals with the case where polynomial $B$ in \eqref{Cauchytr} has a double (at least) root $b$, let us start pointing out that in this case \eqref{residues} implies that the following system of equations holds
\begin{equation}\label{typeIV}
\begin{cases} (T-t)\,(z_1-b)^2\,\sqrt{(z_1-a_1)(z_1-a_2)}\,-\,i\, (z_1-z_2)(z_1-\overline{z_2})\, \Im z_1 & = 0\,,\\[.3cm]
(T-t)\,(z_2-b)^2\,\sqrt{(z_2-a1)(z_2-a_2)}\,-\,i\,\gamma \, (z_2-z_1)(z_2-\overline{z_1}) \,\Im z_2 & = 0\,. \end{cases}
\end{equation}
First, we need the following technical results. On the sequel, $\Arg z$ denotes the branch of the argument of the complex number $z$ belonging to $(-\pi,\pi]$.
\begin{lemma}\label{lem:cotsuma}
Let $c,d \in \R$. Then,
\begin{align}
&\Arg(z_1-c)+\Arg(z_1-d)<\pi \;\;\text{if and only if}\;\; c+d<-2\,,\label{cotsuma1}\\
&\Arg(z_2-c)+\Arg(z_2-d)<\pi \;\;\text{if and only if}\;\; c+d<2\,,\label{cotsuma2}
\end{align}
\end{lemma}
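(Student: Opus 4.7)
The plan is to reduce each of the two equivalences to a single computation of an imaginary part, exploiting the fact that $z_1$ and $z_2$ lie in the open upper half--plane.

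Since $\beta_1,\beta_2>0$, for every real $c$ the complex numbers $z_1-c$ and $z_2-c$ have strictly positive imaginary parts, so their principal arguments lie in $(0,\pi)$. Setting $\alpha=\Arg(z_1-c)$, $\beta=\Arg(z_1-d)$, we have $\alpha+\beta\in(0,2\pi)$ and on this range the condition $\alpha+\beta<\pi$ is equivalent to $\sin(\alpha+\beta)>0$. Since $\sin(\alpha+\beta)$ has the same sign as $\Im\bigl((z_1-c)(z_1-d)\bigr)$ (dividing by the positive modulus $|z_1-c|\,|z_1-d|$), the inequality \eqref{cotsuma1} is equivalent to $\Im\bigl((z_1-c)(z_1-d)\bigr)>0$. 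An entirely parallel reduction works for \eqref{cotsuma2}.

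It then remains to evaluate the two imaginary parts. A direct expansion using $z_1=-1+i\beta_1$ gives
\begin{equation*}
(z_1-c)(z_1-d)=(1+c)(1+d)-\beta_1^{2}-i\beta_1(2+c+d),
\end{equation*}
so that $\Im\bigl((z_1-c)(z_1-d)\bigr)=-\beta_1(2+c+d)$, which is positive if and only if $c+d<-2$. Analogously, for $z_2=1+i\beta_2$,
\begin{equation*}
(z_2-c)(z_2-d)=(1-c)(1-d)-\beta_2^{2}+i\beta_2(2-c-d),
\end{equation*}
whose imaginary part is positive exactly when $c+d<2$. This establishes both equivalences.

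There is no real obstacle: the only subtlety is making sure the equivalence ``$\alpha+\beta<\pi \Leftrightarrow \sin(\alpha+\beta)>0$'' is applied in the right range, which is guaranteed because $\alpha,\beta\in(0,\pi)$ forces $\alpha+\beta\in(0,2\pi)$ and rules out the spurious case $\alpha+\beta\in(2\pi,3\pi)$. The boundary cases $c+d=\pm 2$ correspond exactly to $\alpha+\beta=\pi$ (i.e.\ the product being real and negative), so the strict inequalities on the two sides match.
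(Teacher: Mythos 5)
Your proof is correct and follows essentially the same route as the paper: both reduce the inequality on the sum of arguments to a sign condition on $\sin(\Arg(z_j-c)+\Arg(z_j-d))$ (valid because the sum lies in $(0,2\pi)$), and then compute that sign. The paper writes out the addition formula for the sine with explicit modulus denominators, while you equivalently observe that $\sin(\Arg w_1 + \Arg w_2)$ shares its sign with $\Im(w_1 w_2)$ and expand the product directly; the calculations are identical in substance.
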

\begin{proof}
We know that
$$\Arg(z_j-c)+\Arg(z_j-d)\in(0,2\pi)\,.$$ Thus, making use of well-known trigonometric identities, we have,
 \begin{align*}
    &\sin\left(\Arg(z_j-c)+\Arg(z_j-d)\right)\\
    =&
    \frac{\beta_j(2\Re z_j-c-d)}{\sqrt{(\Re z_j-c)^2+\beta_j^2}\sqrt{(\Re z_j-d)^2+\beta_j^2}}>0\,,
    \end{align*}
which shows that
$$\Arg(z_j-c)+\Arg(z_j-d)<\pi \;\;\text{iff}\;\;  2\Re z_j-c-d>0 \;\;\text{iff}\;\; c+d<2\Re z_j\,,$$
and it settles the proof.
\end{proof}
Now, it is convenient to introduce the point $$x_0 =\frac{-\beta_1^2+\beta_2^2}{4}\,,$$ that is, the intersection between the mediatrix of the segment joining $[z_1,z_2]$ and the real axis, and the points $x_1 < x_2$, where the circumference with center at $x_0$ and passing through $z_1$ and $z_2$ meets the real axis (see Figure 2 below). It is also worth to point out that
$$\Arg(z_j-x_1)\,=\,\frac{1}{2}\,\Arg(z_j-x_0)\,,\;\Arg(z_j-x_2)\,=\,\frac{\pi}{2}\,+\,\frac{1}{2}\,\Arg(z_j-x_0)\,,\;j=1,2\,.$$
\begin{lemma}\label{lem:argum}
Suppose that for some fixed $(\beta_1,\beta_2,\gamma)\in (\R^+)^3$ polynomial $B$ has a double root $b$. Then, it holds
\begin{align}
 &\frac{1}{2}\Arg(z_1-a_1)+\frac{1}{2}\Arg(z_1-a_2)+2\Arg(z_1-b)-\Arg(z_1-x_0)-\frac{3\pi}{2}=0\,,\label{eqsargs1}\\
 &\frac{1}{2}\Arg(z_2-a_1)+\frac{1}{2}\Arg(z_2-a_2)+2\Arg(z_2-b)-\Arg(z_2-x_0)-\frac{\pi}{2}=0\,,\label{eqsargs2}
 \end{align}
 where $a_1,a_2$ denote the endpoints of the support $S_t$.
\end{lemma}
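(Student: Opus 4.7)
My plan is to take arguments of both sides of the residue system \eqref{typeIV} after first replacing its right-hand side by a much simpler expression. The key algebraic observation is the pair of identities
\begin{equation*}
(z_1-z_2)(z_1-\overline{z}_2)=-4(z_1-x_0),\qquad (z_2-z_1)(z_2-\overline{z}_1)=4(z_2-x_0),
\end{equation*}
which follow by direct computation using $z_1=-1+i\beta_1$, $z_2=1+i\beta_2$ and $x_0=(\beta_2^2-\beta_1^2)/4$. A clean route is to expand $(z_1-z_2)(z_1-\overline{z}_2)=(z_1-1)^2+\beta_2^2$ and then invoke $(z_1+1)^2+\beta_1^2=0$ (the defining relation of $z_1$) to rewrite this as $-4z_1+(\beta_2^2-\beta_1^2)=-4(z_1-x_0)$; the companion identity is obtained symmetrically. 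Substituting these into \eqref{typeIV} collapses the system to
\begin{equation*}
(T-t)(z_j-b)^2\sqrt{(z_j-a_1)(z_j-a_2)}=\mp 4i\,c_j\,(z_j-x_0),\qquad j=1,2,
\end{equation*}
with $c_1=\beta_1$, $c_2=\gamma\beta_2$, and the sign ``$-$'' for $j=1$, ``$+$'' for $j=2$.

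Since $z_j$ lies in the open upper half plane and $a_1,a_2,b,x_0\in\R$, all the arguments $\Arg(z_j-a_k)$, $\Arg(z_j-b)$ and $\Arg(z_j-x_0)$ belong to $(0,\pi)$, so none of them sits on the branch cut of $\Arg$. The branch of $\sqrt{A(z)}$ fixed in \eqref{Cauchytr} (namely $\sqrt{A(z)}\sim z$ at infinity) gives, for $z_j$ in the upper half plane,
\begin{equation*}
\Arg\sqrt{(z_j-a_1)(z_j-a_2)}=\tfrac{1}{2}\bigl[\Arg(z_j-a_1)+\Arg(z_j-a_2)\bigr].
\end{equation*}
Equating arguments of both sides of the reduced equations modulo $2\pi$, and using $\Arg(-i)=-\pi/2\equiv 3\pi/2$ and $\Arg(i)=\pi/2$, produces exactly the two claimed identities \eqref{eqsargs1}--\eqref{eqsargs2}, up to a possible integer multiple of $2\pi$.

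The final step — and the main obstacle — is to show that this integer multiple is zero. The range constraints on the individual arguments confine each left-hand side to an interval of length at most $4\pi$, leaving only two candidate integers per identity. These candidates remain constant under continuous variation of $(\beta_1,\beta_2,\gamma,t)$ on the region where a real multiple root $b$ of $B$ outside the interior of $S_t$ persists, since no branch cut is ever crossed (all arguments stay strictly in $(0,\pi)$). A single reference computation — for instance the totally symmetric case of Remark~\ref{rem:totsym}, in which $x_0=0$, $b=0$ and $a_1=-a_2=-a$, so that for $\beta$ small one has $\Arg(z_1)\to\pi$, $\Arg(z_1+a)\to 0$, $\Arg(z_1-a)\to\pi$, and symmetrically at $z_2$ — then pins the integer to zero in both identities. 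In summary, Steps 1--3 are essentially bookkeeping; the delicate point is ruling out a spurious $2\pi$ shift, which the continuity argument together with one verified reference configuration achieves.
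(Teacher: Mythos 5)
Your algebraic reduction in Steps 1--2 is correct and matches what the paper tacitly does: the identities $(z_1-z_2)(z_1-\overline z_2)=-4(z_1-x_0)$ and $(z_2-z_1)(z_2-\overline z_1)=4(z_2-x_0)$ check out, the branch of $\sqrt{A}$ behaves as you say for $z_j$ in the upper half-plane, and taking arguments gives exactly \eqref{eqsargs1}--\eqref{eqsargs2} up to integer multiples of $2\pi$. The problem is Step 3. You claim the integer shift is locally constant on ``the region where a real multiple root $b$ of $B$ outside the interior of $S_t$ persists'' and then pin it by one reference computation. Local constancy on connected components is fine, but you never establish that this region is path-connected, nor that your reference configuration lies in the same component as an arbitrary $(\beta_1,\beta_2,\gamma,t)$ covered by the lemma. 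This is a genuine gap: the set of parameters for which $B$ has a real multiple root is a codimension-one subvariety whose global topology is precisely one of the things being worked out in this section, and in fact the structure of that set is only understood \emph{after} Proposition~\ref{prop:main}, which itself invokes this lemma — so any appeal to that structure risks circularity. You also contradict your own setup: you restrict to $b$ outside the interior of $S_t$, but your reference configuration (the totally symmetric merger with $b=0$, $a_1=-a_2$) has $b$ sitting inside the support.

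The paper's proof avoids all of this by being entirely local and elementary. It bounds $k_1\in\{-1,0\}$ and $k_2\in\{0,1\}$ directly from the ranges $\Arg(z_j-\cdot)\in(0,\pi)$ (sharpened by $\Arg(z_1-a_2)\in(\pi/2,\pi)$), then shows $k_1=-1$ would force $a_1+a_2<-2$ and $b<x_1$, which — fed into the second identity via Lemma~\ref{lem:cotsuma} — would force $k_2<0$, a contradiction. That rules out $k_1=-1$ (and symmetrically $k_2=1$) without any global topological input. To repair your proof you would either need to prove the required connectedness independently of Proposition~\ref{prop:main} (and connect across the $b\in\partial S_t$ boundary to reach your reference point), or abandon the continuity route in favor of a direct bound-and-contradict argument of the paper's type.
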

\begin{proof}
From \eqref{typeIV}, the following system must hold:
\begin{align}
 &\frac{1}{2}\Arg(z_1-a_1)+\frac{1}{2}\Arg(z_1-a_2)+2\Arg(z_1-b)-\Arg(z_1-x_0)-\frac{3\pi}{2}=2k_1\pi\,,\label{systarg1}\\
 &\frac{1}{2}\Arg(z_2-a_1)+\frac{1}{2}\Arg(z_2-a_2)+2\Arg(z_2-b)-\Arg(z_2-x_0)-\frac{\pi}{2}=2k_2\pi\,,\label{systarg2}
 \end{align}
 with $k_j\in\mathbb{Z}$. Now, it will be shown that $k_1=k_2=0$. First, let us see that these $k_j$ just can take some particular values.

 We initially deal with the \eqref{systarg1}. First, since
 $\Arg(z_1-a_1)\in(0,\pi)$, $\Arg(z_1-a_2)\in(\pi/2,\pi)$ (because $a_2(0)>-1$ and $\partial a_2/\partial t>0$), $\Arg(z_1-b)\in(0,\pi)$ and $\Arg(z_1-x_0)\in (0,\pi)$, it yields
        \begin{align*}
        &\frac{1}{2}\Arg(z_1-a_1)+\frac{1}{2}\Arg(z_1-a_2)+2\Arg(z_1-b)
        -\Arg(z_1-x_0)-\frac{3\pi}{2}\\
        \in&
        \left(\frac{-9\pi}{4},\frac{3\pi}{2}\right)\,,
        \end{align*}
 and we conclude that $k_1=-1$ or $k_1=0$. In a similar way, it is easy to check that $k_2 \in \{0,1\}$ in \eqref{systarg2}.

 Now, let us show that $k_1=-1$ cannot occur. Let us see, first, that if $k_1=-1$, then we would necessarily have that
 $a_1+a_2<-2$, $b<x_1$. Indeed,

 \begin{itemize}

  \item We have $a_1+a_2<-2$, since otherwise, \eqref{cotsuma1} would yield
        $$\frac{1}{2}\Arg(z_1-a_1)+\frac{1}{2}\Arg(z_1-a_2)\geq \frac{\pi}{2}\,,$$
  and thus,
      \begin{align*}
      &\frac{1}{2}\Arg(z_1-a_1)+\frac{1}{2}\Arg(z_1-a_2)+2\Arg(z_1-b)
    -\Arg(z_1-x_0)-\frac{3\pi}{2}\\
    \geq& \frac{\pi}{2}+2\Arg(z_1-b) -\Arg(z_1-x_0)-\frac{3\pi}{2}=2\Arg(z_1-b) -\Arg(z_1-x_0)-\pi\\
    >&0-\pi-\pi=-2\pi\,,
    \end{align*}
  which would imply that $k_1\neq -1$.

  \item $b<x_1\,,$ since
  \begin{align*}
    &-2\pi=\frac{1}{2}\Arg(z_1-a_1)+\frac{1}{2}\Arg(z_1-a_2)+2\Arg(z_1-b)
    -\Arg(z_1-x_0)-\frac{3\pi}{2}\\
    \Longrightarrow&2\Arg(z_1-b)=-\frac{1}{2}\Arg(z_1-a_1)-\frac{1}{2}\Arg(z_1-a_2)
    +\Arg(z_1-x_0)-\frac{\pi}{2}\\
    \Longrightarrow& \Arg(z_1-b)<\frac{1}{2}\Arg(z_1-x_0)=\Arg(z_1-x_1)\Longrightarrow b<x_1.
  \end{align*}

 \end{itemize}

 Thus, taking into account \eqref{systarg2}, one has,
 \begin{align*}
 2k_2\pi&=\frac{1}{2}\Arg(z_2-a_1)+\frac{1}{2}\Arg(z_2-a_2)+2\Arg(z_2-b)
        -\Arg(z_2-x_0)-\frac{\pi}{2}\\
        &<\frac{1}{2}\Arg(z_2-a_1)+\frac{1}{2}\Arg(z_2-a_2)+2\Arg(z_2-x_1)
        -\Arg(z_2-x_0)-\frac{\pi}{2}\\
        &=\frac{1}{2}\Arg(z_2-a_1)+\frac{1}{2}\Arg(z_2-a_2)-\frac{\pi}{2}<0\,,
 \end{align*}
where for the last inequality we have used \eqref{cotsuma2}.
But this inequality would imply that $k_2<0$, while it is known that $k_2=0$ or $k_2=1$. Hence, we conclude that $k_1=0$ and \eqref{eqsargs1} is established.

In a similar fashion, \eqref{eqsargs2} is established.

\end{proof}

It will be also useful the following result about the location of the point $b$ and the arithmetic mean of the endpoints of $S_t$.
\begin{lemma}\label{lem:position}

\hspace{.2cm}

\begin{itemize}
\item[i)] The point $\displaystyle \frac{a_1+a_2}{2}\in (-1,1)$ or, equivalently,
\begin{align*}
 &\Arg(z_1-a_1)+\Arg(z_1-a_2) > \pi\,,\\
 &\Arg(z_2-a_1)+\Arg(z_2-a_2) < \pi\,.
 \end{align*}
\item[ii)] $b\in (-1,1)$.
\end{itemize}
\end{lemma}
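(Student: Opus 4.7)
My plan is to combine the residue equations \eqref{residues}--\eqref{typeIV} with Lemmas 4.1 and 4.2 by taking arguments of both sides and performing a careful branch analysis analogous to the one used for \eqref{eqsargs1}--\eqref{eqsargs2} in the proof of Lemma 4.2.

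\textbf{Part (i).} Via Lemma 4.1 the two inequalities are equivalent to $-2<a_1+a_2<2$. To prove $a_1+a_2<2$, I take the argument of the second equation in \eqref{typeIV} and compute explicitly
\[
(z_2-z_1)(z_2-\bar z_1) = (4+\beta_1^2-\beta_2^2)+4i\beta_2,
\]
so that $i\gamma\,\Im z_2\cdot(z_2-z_1)(z_2-\bar z_1)$ has strictly negative real part $-4\gamma\beta_2^2$ and imaginary part $\gamma\beta_2(4+\beta_1^2-\beta_2^2)$; its argument therefore lies in $(\pi/2,\pi)$ or in $(-\pi,-\pi/2)$, entirely outside $(-\pi/2,\pi/2)$. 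Equating this with $\Arg\bigl((z_2-b)^2\sqrt{A(z_2)}\bigr)=2\Arg(z_2-b)+\tfrac12[\Arg(z_2-a_1)+\Arg(z_2-a_2)]$ modulo $2\pi$, and performing the range analysis used to pin down $k_2=0$ in Lemma 4.2, forces $\tfrac12[\Arg(z_2-a_1)+\Arg(z_2-a_2)]<\pi/2$, which is the desired bound. The opposite inequality $a_1+a_2>-2$ follows by the analogous analysis applied to the first equation in \eqref{typeIV}, using $(z_1-z_2)(z_1-\bar z_2) = (4+\beta_2^2-\beta_1^2)-4i\beta_1$.

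\textbf{Part (ii).} With (i) available, I invoke Lemma 4.2. Rewriting \eqref{eqsargs1} via the identity $\Arg(z_1-x_2)=\pi/2+\tfrac12\Arg(z_1-x_0)$ as
\[
2\Arg(z_1-b)=\pi+2\Arg(z_1-x_2)-\tfrac12\bigl[\Arg(z_1-a_1)+\Arg(z_1-a_2)\bigr],
\]
the lower bound $\Arg(z_1-a_1)+\Arg(z_1-a_2)>\pi$ from (i) yields $\Arg(z_1-b)<\Arg(z_1-x_2)$, hence $b<x_2$; symmetrically \eqref{eqsargs2} combined with (i) gives $b>x_1$. To sharpen $b\in(x_1,x_2)$ to the stronger $b\in(-1,1)$, I combine both identities: adding \eqref{eqsargs1} and \eqref{eqsargs2} and applying (i) on both sides produces a two-sided inequality on $\Arg(z_1-b)+\Arg(z_2-b)$ which, together with the strict monotonicity of each $\Arg(z_j-\cdot)$ along the real line, localizes $b$ strictly inside $(-1,1)$.

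\textbf{Main obstacle.} The crux is the mod-$2\pi$ branch selection in part (i): the argument equation admits a priori several choices for the integer shift, and ruling out the spurious ones requires an exhaustive range analysis exactly as in the determination of $k_1=k_2=0$ in the proof of Lemma 4.2. Once the correct branch is fixed and the first bound is secured, the sharpening of $b\in(x_1,x_2)$ to $b\in(-1,1)$ in part (ii) is a finite combination of the identities of Lemma 4.2, the constraint from (i), and the geometric relations among $x_0,x_1,x_2$.
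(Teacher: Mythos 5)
There are two genuine gaps, both arising from the fact that you reverse the logical order that the proof actually requires.

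For part (i), you propose to deduce $\tfrac{1}{2}[\Arg(z_2-a_1)+\Arg(z_2-a_2)]<\pi/2$ directly from the sign of the real part of $i\gamma\,\Im z_2\,(z_2-z_1)(z_2-\overline z_1)$ together with ``the range analysis used to pin down $k_2=0$.'' But fixing $k_2=0$ produces only the identity \eqref{eqsargs2}: it tells you that
$\tfrac{1}{2}[\Arg(z_2-a_1)+\Arg(z_2-a_2)]=-2\Arg(z_2-b)+\Arg(z_2-x_0)+\tfrac{\pi}{2}$,
and this alone does not bound the left side until you know something quantitative about $\Arg(z_2-b)$. Knowing that the total argument of the right-hand side of \eqref{typeIV} lies outside $(-\pi/2,\pi/2)$ does not pin down how the total splits between the $2\Arg(z_2-b)$ term and the $\tfrac12\sum\Arg(z_2-a_i)$ term. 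The paper's proof in fact establishes the bound $b\in(x_1,x_2)$ \emph{first}, using only the crude information $\Arg(z_j-a_i)\in(0,\pi)$ in \eqref{eqsargs1}--\eqref{eqsargs2}, and \emph{then} feeds $\Arg(z_1-b)<\Arg(z_1-x_2)$ (resp.\ the lower bound at $x_1$) back into the same identities to obtain part (i). Your plan reverses this dependence, and the first step as written is not supported.

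For part (ii), adding \eqref{eqsargs1} and \eqref{eqsargs2} only controls the single quantity $\Arg(z_1-b)+\Arg(z_2-b)$, and the inequalities you would import from (i) give it a range of length $\pi/2$ centered at $\tfrac12[\Arg(z_1-x_0)+\Arg(z_2-x_0)]+\tfrac{\pi}{2}$. This window does not in general sit inside the interval $\bigl(\Arg(z_1+1)+\Arg(z_2+1),\,\Arg(z_1-1)+\Arg(z_2-1)\bigr)$ that characterizes $b\in(-1,1)$; for instance in the symmetric case $\beta_1=\beta_2=\beta$ the containment fails once $\beta>2$. So ``a two-sided inequality on the sum $\Arg(z_1-b)+\Arg(z_2-b)$'' is not enough. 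The paper's proof of $b<1$ (and symmetrically $b>-1$) uses each identity \emph{separately} in a contradiction argument: assuming $b\ge1$ forces from \eqref{eqsargs2} that $x_0>1$ (hence $x_1<-1$), introduces the bisector point $a$ and the tangent point $\tilde a$, and invokes an elementary triangle-bisector inequality to conclude $\tfrac{a_1+a_2}{2}<a\le\tilde a<x_1<-1$, contradicting part (i). That geometric ingredient is essential and is missing from your sketch.
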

\begin{proof}
Let us show, first, that $b\in(x_1,x_2)$. Indeed, \eqref{eqsargs1} yields
\begin{align*}
2\Arg(z_1-b)&=
-\frac{1}{2}\Arg(z_1-a_1)-\frac{1}{2}\Arg(z_1-a_2)+\Arg(z_1-x_0)+\frac{3\pi}{2}\\
&>-\frac{\pi}{2}-\frac{\pi}{2}+\Arg(z_1-x_0)+\frac{3\pi}{2}=\Arg(z_1-x_0)+\frac{\pi}{2}>\Arg(z_1-x_0)\,,
\end{align*}
which implies that
$$\Arg(z_1-b)>\frac{1}{2}\Arg(z_1-x_0)=\Arg(z_1-x_1)$$
and, hence, it holds $b>x_1$. Analogously, from \eqref{eqsargs2} it is easy to get that $b<x_2$.

Although this first bound for $b$ is rough, it allows to get the bounds for the arithmetic mean. In fact, from \eqref{eqsargs1} and using the bounds obtained for $b$, one has,
\begin{align*}
\frac{1}{2}\Arg(z_1-a_1)+\frac{1}{2}\Arg(z_1-a_2)&=
-2\Arg(z_1-b)+\Arg(z_1-x_0)+\frac{3\pi}{2}\\
&>-2\Arg(z_1-x_2)+\Arg(z_1-x_0)+\frac{3\pi}{2}\\
&=
-2\left(\frac{\pi}{2}+\frac{1}{2}\Arg(z_1-x_0)\right)+\Arg(z_1-x_0)+\frac{3\pi}{2}=\frac{\pi}{2}\,,
\end{align*}
which, by \eqref{cotsuma1}, yields $(a_1+a_2)/2>-1$. In the same way, from \eqref{eqsargs2} and \eqref{cotsuma2}, the inequality $(a_1+a_2)/2<1$ is easily obtained.

Finally, using these bounds for the mass center of the endpoints, it is possible to precise the location of $b$. Let us start showing that $b<1$. Indeed, if $b\geq 1$, we have, using \eqref{eqsargs2},
\begin{align*}
0&\geq \frac{1}{2}\Arg(z_2-a_1)+\frac{1}{2}\Arg(z_2-a_2)+2\frac{\pi}{2}-\Arg(z_2-x_0)-\frac{\pi}{2}\\
&=
\frac{1}{2}\Arg(z_2-a_1)+\frac{1}{2}\Arg(z_2-a_2)-\Arg(z_2-x_0)+\frac{\pi}{2}
\end{align*}
and, hence,
\begin{equation}\label{medas}
\frac{1}{2}\Arg(z_2-a_1)+\frac{1}{2}\Arg(z_2-a_2)\leq\Arg(z_2-x_0)-\frac{\pi}{2}\,.
\end{equation}
Inequality \eqref{medas} implies some consequences. First, it is easy to check that it would be possible as long as $\Arg(z_2-x_0)>\pi/2$, which means that $x_0>1$.

Moreover, let $a\in\R$ such that
$$\frac{1}{2}\Arg(z_2-a_1)+\frac{1}{2}\Arg(z_2-a_2)=\Arg(z_2-a)\,,$$
that is, $a \in (a_1,a_2)$ is the point where the bisector of the angle $\widehat{a_1,z_2,a_2}$ meets the real axis. On the other hand, let $\tilde{a}\in\mathbb{R}$ such that
$$\Arg(z_2-x_0)-\frac{\pi}{2}=\Arg(z_2-\tilde{a})\,.$$ This last point may be seen as the point where the tangent line to the circumference with center $x_0$ and passing through $z_2$ intersects the real axis. Since $x_0>1$, then $\tilde{a} < x_1$ (see Figure \ref{figas}). Thus, inequality \eqref{medas} yields $a\leq\tilde{a}<x_1<1$, but this is a contradiction with the fact that $a_1+a_2>1$. Indeed, it is enough to make use of the following property from elementary geometry:

``Let $ABC$ be a triangle and consider the bisector of the angle $a$, which splits the segment $BC$ into two parts, $BD$ and $DC$. Then, the length of $BD$ is less than the length of $DC$ if and only if the angle $B$ is greater than the angle $C$''.

This simple property applied to the triangle $a_1a_2z_2$, and taking the bisector joining $z_2$ with $a$, means that $a_2-a<a-a_1$ but, then,
$$\frac{a_1+a_2}{2}=a-\frac{(a-a_1)-(a_2-a)}{2}<a\leq\tilde{a}<x_1<-1,$$ which contradicts the result in Lemma 4.1 above.

Proceeding in an analogous way, we can prove the lower bound, i.e. $b>-1$, using now \eqref{eqsargs1}.

\end{proof}
\begin{figure}\label{fig:location}
        \begin{center}
        \includegraphics[scale=0.4]{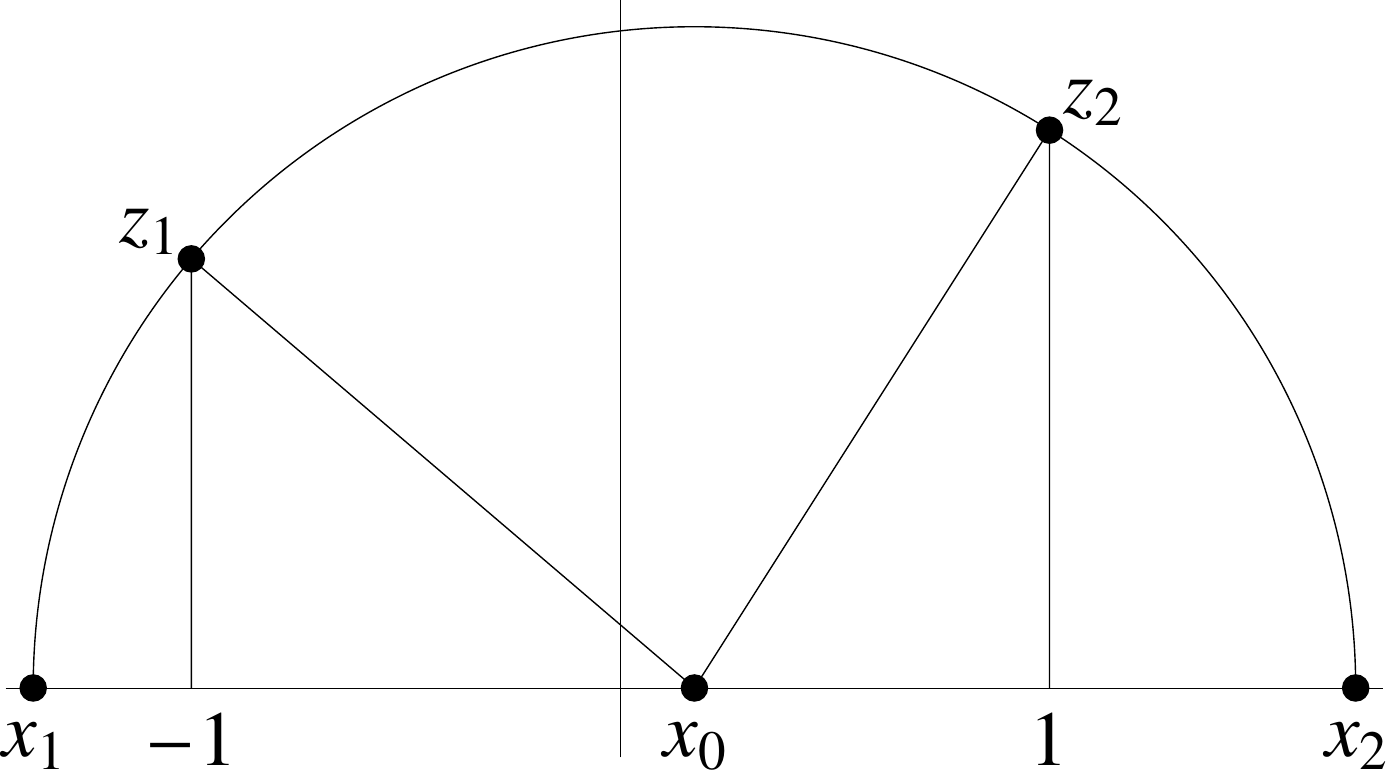}
        \caption{Location of points $x_0,x_1,x_2$.}
        \end{center}
\end{figure}

\begin{figure}[h]
\begin{center}
\includegraphics[scale=0.7]{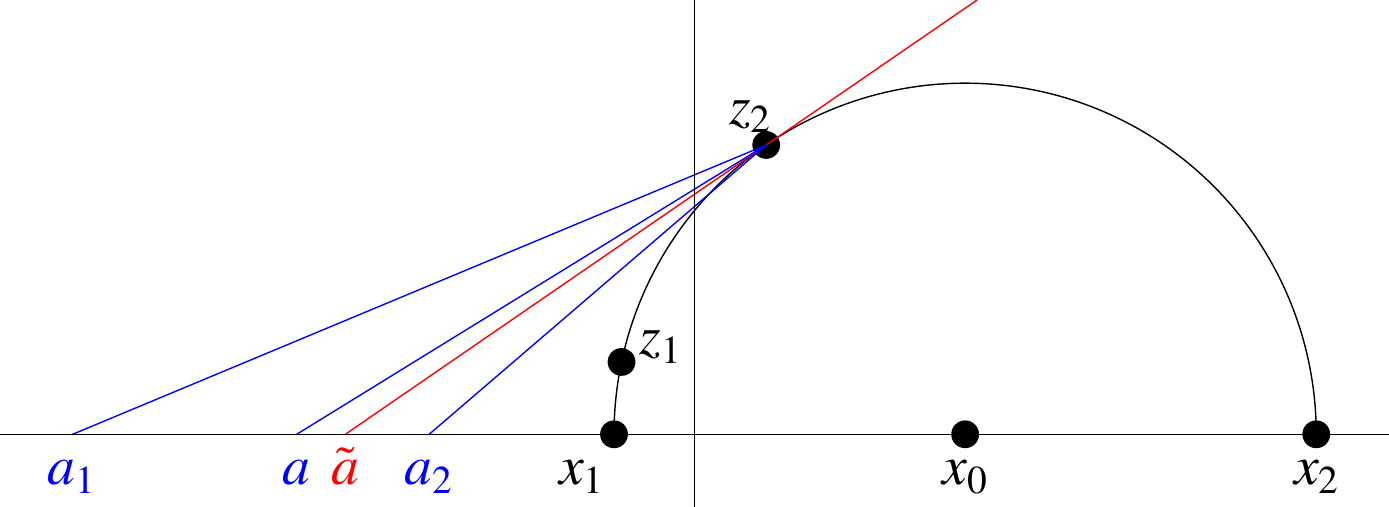}
\caption{location of points $a$ and $\widetilde{a}$ if $b\geq 1$.}
\label{figas}
\end{center}
\end{figure}

\subsubsection*{Proof of Proposition 4.1}

Let us take $(\beta_1,\beta_2)$ fixed and $\gamma$ and $t$ varying in such a way that $B$ has a double root: $B(z)=(z-b)^2\,,$ with $b\notin (a_1,a_2)$.

First, from \eqref{parammass} we have
$$\frac{\partial}{\partial \gamma}\frac{(T-t)\sqrt{A(z)}(z-b)^2}{D(z)}= \,\frac{H(z)}{\sqrt{A(z)}D_2(z)}\,$$
with $D_2=(z-z_2)(z-\overline{z_2})$ and $H$ being a monic polynomial of degree $2$ and having at least a root in $(a_1,a_2)$. Taking residues at $z_1$ and $z_2$ in the previous identity it is easy to check that $H(z)=D_2(z)+P(z),$ where $P$ is the interpolating polynomial of degree $\leq 1$ satisfying
$$P(z_2)=\frac{\sqrt{A(z_2)}(z_2-\overline{z_2})}{2}\,,\qquad P(\overline{z_2})=\frac{\sqrt{A(\overline{z_2})}(\overline{z_2}-z_2)}{2}$$
and, thus, $$H(z)=(z-z_2)(z-\overline{z_2})+\frac{\sqrt{A(z_2)}}{2}(z-\overline{z_2})
 +\frac{\sqrt{A(\overline{z_2})}}{2}(z-z_2)\,.$$
On the other hand, from \eqref{BuyRakhrat} we also have
$$\frac{\partial}{\partial t}\frac{(T-t)\sqrt{A(z)}(z-b)^2}{D(z)}=-\,\frac{1}{\sqrt{A(z)}}\,.$$
Therefore, considering now $t=t(\gamma)$ and using again the notation $\dot{f}=\partial f/\partial \gamma$, one has, on the one hand,

\begin{align*}\label{deriv1}
\frac{\partial}{\partial \gamma}\frac{(T-t)\sqrt{A(z)}(z-b)^2}{D(z)}=\frac{H(z)}{\sqrt{A(z)}D_2(z)}+\frac{-1}{\sqrt{A(z)}}\dot{t}=
\frac{H(z)-\dot{t} D_2(z)}{\sqrt{A(z)}D_2(z)}=:\frac{L(z)}{\sqrt{A(z)}D_2(z)}
\end{align*}
and, on the other,
\begin{equation*}
\begin{split}
& \frac{\partial}{\partial \gamma}\frac{(T-t)\sqrt{A(z)}(z-b)^2}{D(z)}= \\
& \frac{1}{D(z)}\left(-\dot{t}\sqrt{A(z)}(z-b)^2+(T-t)\frac{\dot{A}(z)}{2\sqrt{A(z)}}(z-b)^2+(T-t)\sqrt{A(z)}2(z-b)(-\dot{b})\right)\,.
\end{split}
\end{equation*}
Then, it yields,
\begin{equation}\label{EDOpolgamma}
-2\dot{t}A(z)(z-b)^2+(T-t)\dot{A}(z)(z-b)^2-4(T-t)A(z)(z-b)\dot{b}=2L(z)D_1(z)
\end{equation}
 where $D_1(z)=(z-z_1)(z-\overline{z_1})=(D/D_2)(z)$. The left and right-hand sides of \eqref{EDOpolgamma} are polynomials of degree $4$, with the first-hand one vanishing for $z=b$. This implies that $L(b)=0$ and thus,
\begin{equation*}\label{EDOgammat}
\dot{t}=\frac{H(b)}{D_2(b)}=\frac{D_2(b)+P(b)}{D_2(b)}\,.
\end{equation*}
Therefore, the following differential equations holds
\begin{equation*}\label{EDOgamma}
\begin{array}{ll}
\dot{a}_1&=\displaystyle\frac{-2D_1(a_1)L(a_1)}{(T-t)(a_1-a_2)(a_1-b)^2}\,,\\[5mm]
\dot{a}_2&=\displaystyle\frac{-2D_1(a_2)L(a_2)}{(T-t)(a_2-a_1)(a_2-b)^2}\,,\\[5mm]
\dot{b}&=\displaystyle\frac{-D_1(b)L'(b)}{2(T-t)(b-a_1)(b-a_2)}\,.
\end{array}
\end{equation*}
Now, since polynomial $L$ is very important for our analysis, we are concerned with its expression. Indeed, we have for $x\in \R$,
\begin{align*}
L(x)&=H(x)-\dot{t}D_2(x)=P(x)-\frac{P(b)}{D_2(b)}D_2(x)\\
&=
\Re\left(\sqrt{A(z_2)}(x-\overline{z_2})\left(1-\frac{x-z_2}{b-z_2}\right)\right)\\
&=
(b-x)\Re\left(\sqrt{A(z_2)}\frac{x-\overline{z_2}}{b-z_2}\right)\,.
\end{align*}
Thus, if $\ell$ denotes the other real root of $L$, then
$$L(x)=\Re\left(\frac{\sqrt{A(z_2)}}{z_2-b}\right)(x-b)(x-\ell)\,,$$
and $\ell$ is such that
$$
\Re\left(\sqrt{A(z_2)}\frac{\ell-\overline{z_2}}{b-z_2}\right)= 0
\,.$$
Consequently, taking into account Lemma 4.3, the following relation for the arguments holds:
\begin{equation}\label{relarg}
\frac{1}{2}\,\Arg (z_2-a_1)\,+\,\frac{1}{2}\,\Arg (z_2-a_2)\,-\,\Arg (z_2-\ell)\,-\,\Arg (z_2-b)\,=\,-\,\frac{\pi}{2}\,.
\end{equation}
Now, we are in a position to study what happens in each of the scenarios in the statement of Proposition 4.1.
\begin{itemize}

\item If $a_1\leq a_2\leq b$ holds, then
$$\Arg\left(\frac{\sqrt{A(z_2)}}{z_2-b}\right)=
\frac{1}{2}\Arg(z_2-a_1)+\frac{1}{2}\Arg(z_2-a_2)-
\Arg(z_2-b)\in\left(-\,\frac{\pi}{2},0\right)\,,$$
and thus, the leading coefficient of $L$ is
given by $$\Re\frac{\sqrt{A(z_2)}}{z_2-b}>0\,.$$

Now, let us show that $l>b$. Indeed, if this were not the case, by \eqref{relarg} we would have,
$$\frac{1}{2}\Arg(z_2-a_1)+\frac{1}{2}\Arg(z_2-a_2)+\frac{\pi}{2}\leq 2\Arg(z_2-b)$$
but, using \eqref{eqsargs2}, $$2\Arg(z_2-b)\,=\,-\,\frac{1}{2}\Arg(z_2-a_1)\,\frac{1}{2}\Arg(z_2-a_2)\,+\,\Arg(z_2-x_0)\,+\,\frac{\pi}{2}$$ and, hence,
\begin{align*}
&\frac{1}{2}\Arg(z_2-a_1)+\frac{1}{2}\Arg(z_2-a_2)+\frac{\pi}{2}\leq \frac{-1}{2}\Arg(z_2-a_1)+\frac{-1}{2}\Arg(z_2-a_2)+\Arg(z_2-x_0)+\frac{\pi}{2}\\
\Longrightarrow &
\frac{1}{2}\Arg(z_2-a_1)+\frac{1}{2}\Arg(z_2-a_2)\leq \frac{1}{2}\Arg(z_2-x_0)=\Arg(z_2-x_1)
\end{align*}
Thus, taking $a$ such that $\displaystyle \frac{\Arg(z_2-a_1)+\Arg(z_2-a_2)}{2}=\Arg(z_2-a)$, we would get $a_2-a<a-a_1$ and, hence,
$$\frac{a_1+a_2}{2}=a+\frac{(a2-a)-(a-a_1)}{2}<a<x_1<1\,,$$
which is not possible. Therefore, the inequality $l>b$ has been established, and it yields.
\begin{equation*}
\begin{array}{ll}
\dot{a}_1&=\displaystyle\frac{-2D_1(a_1)L(a_1)}{(T-t)(a_1-a_2)(a_1-b)^2}>0\,,\\[5mm]
\dot{a}_2&=\displaystyle\frac{-2D_1(a_2)L(a_2)}{(T-t)(a_2-a_1)(a_2-b)^2}<0\,,\\[5mm]
\dot{b}&=\displaystyle\frac{-D_1(b)L'(b)}{2(T-t)(b-a_1)(b-a_2)}>0\,,
\end{array}
\end{equation*}
Moreover, having in mind \eqref{eqsargs2}, the fact that $\dot{b}>0$ necessarily implies that $\dot{a}<0$,
what means, by \eqref{relarg}, that $\Arg(z_2-\ell)$ is a decreasing function of $\gamma$ and, hence, $\dot{\ell}<0$.

Therefore, considering the setting where $(\beta_1,\beta_2)$ are fixed and $\gamma$ is such that there exists $t>0$ with $a_1<a_2<b$, if $\gamma$ is allowed to increase as far as possible, we see that the endpoints $a_1$ and $a_2$ tend to collide, as well as points $b$ and $\ell$ on the interval $(a_2,+\infty)$; but we know this last collision cannot take place since the inequality $b<\ell$ is strict. Therefore, the unique feasible final setting consists in the collision $a_1=a_2$, which obviously occurs when $t=0$: we reach the situation where $\varphi$ has two minima (for $\gamma = \widetilde{\Gamma}_1$ in Theorem 3.2).

On the other hand, if the same scenario is handled but now allowing $\gamma$ to decrease as far as possible, it easy to check that the endpoints $a_1$ and $a_2$ tend to move away from each other, as well as points $b$ and $\ell$, in such a way that necessarily the collision between $a_2$ and $b$ finally occurs and, so, a type III singularity takes place (for $\gamma = \Gamma_1 < \widetilde{\Gamma}_1$).

\item Finally, the reciprocal case $b\leq a_1 \leq a_2$ may be easily reduced to the previous one by means of the transformation $x\,\rightarrow\,-x$, with $\gamma \rightarrow \frac{1}{\gamma}$, which yields $$a_1 \rightarrow -a_2\,,\;a_2 \rightarrow -a_1\,,\;b \rightarrow -b\,,\;t \rightarrow \frac{t}{\gamma}\,.$$

\end{itemize}

\subsection{Proof of Theorem 3.4}

The full description of the dynamics of the equilibrium measure runs parallelling to the proof of \cite[Theorems 15--16]{MOR2015} and \cite[Theorem 2.1]{OrSL2015}. Therefore, we restrict here to outline the proof, omitting certain details.

When $(\beta_1,\beta_2) \in \Omega_0\,$ and $\gamma \in (\widetilde{\Gamma}_1, \widetilde{\Gamma}_2)\,,$ Theorem \ref{thm:2M} shows that \eqref{couple} has two relative minima $-1<\zeta_1<\zeta_3<1$. First, assume that $\varphi(\zeta_1) < \varphi(\zeta_3)\,$ and, thus, that the leftmost relative minimum is the absolute one; in addition, $\varphi$ has a relative maximum $\zeta_2$ such that $-1<\zeta_1<\zeta_2< \zeta_3<1\,.$

Therefore, by \eqref{Cauchytr} we have for the endpoints of the support (zeros of $A$) and the zeros of $B$ that $-1 <a_1(0) = a_2(0) = \zeta_1 <b_1(0) = \zeta_2 <b_2(0) = \zeta_3 <1\,$ in such a way that \eqref{dynamics1} yields: $\dot{a_1}<0, \dot{a_2}>0, \dot{b_1}<0\,$ and $\dot{b_2}>0\,.$ Thus, points $a_2$ and $b_1$ tend to collide, and this collision would take place in a time $T^*<T$ by Theorem \ref{thm:end}; but this would contradict equilibrium condition \eqref{equilibrium}. Hence, there must exist a critical value $T_1<T^*$ where the initial configuration changes. The unique change which takes care of condition \eqref{equilibrium} is the birth of a couple of real zeros $a_3,a_4$ from the rightmost double zero $b_2$. That is, at this critical value $T_1$ a type I singularity occurs and immediately a new cut arises.

Then, for $t>T_1$, combining again Theorem 2.1 and \eqref{equilibrium}, we have that the central endpoints $a_2$ and $a_3$ tend to collide (and, of course, also collide with $b_1$), giving birth to a double root of $B$, that is, a type II singularity. Finally, this double root necessarily splits into a couple of conjugate imaginary roots which tend to some prescribed points in $\C \setminus \R$, while $a_1\rightarrow -\infty$ and $a_4\rightarrow +\infty$, as established in Theorem \ref{thm:end}. In fact, it is not possible that the couple of imaginary roots of $B$ collide giving birth again to a double root of $B$. Indeed, if it were the case, and a new double root were born at, say, $t=T_3>T_2$, using Proposition 4.1, we could vary $\gamma$ and $T_3$, with its corresponding $T_2$, arriving to the collision between the endpoints $a_1$ and $a_2$; thus, the existence of a value of $\gamma$ for which the merger occurs for $t=T_2<T_3=0$ would be established, which is an absurd.

If $\varphi(\zeta_1) = \varphi(\zeta_3)\,,$ which is possible for any $(\beta_1,\beta_2) \in \Omega_0\,$ and a suitable value of $\gamma$, then the evolution is the same as above, but now $T_1=0$ and, thus, the initial one--cut phase does not take place. Obviously, when $\varphi(\zeta_1) > \varphi(\zeta_3)\,,$ the evolution is also the same as above but starting at $\zeta_3$ in place of $\zeta_1$.

The description of the dynamics in scenario (b) is similar, with the unique difference that we start with a couple of conjugate imaginary roots $b_1$ and $b_2 = \overline{b}_1$ which at a certain time $T_0<T^*$ collide at the real axis, becoming a double real root of $B$, in such a way that this double root immediately produces a pair of simple real roots $b_1<b_2$, as at the beginning of the dynamics above.

Conversely, in scenario (c), the initial couple of conjugate imaginary roots $b_1$ and $b_2 = \overline{b}_1$ never attain the real axis and, consequently, the support always consists of a single interval.

Finally, observe that the boundary between scenarios (b) and (c) occurs precisely when the pair of roots of $B$ collide with the root of $A$, producing a type III singularity, which has been studied in previous Theorem 3.3

\section*{Appendix: The ``two--cut'' body}

Throughout this appendix, along with the three--dimension body
\begin{equation}\label{body}
\widetilde{\Delta}\, =\,\{(\beta_1,\beta_2,\gamma) \in (\R^+)^3\,:\,(\beta_1,\beta_2)\in \Omega_0\,,\,\gamma \in (\widetilde{\Gamma}_1(\beta_1,\beta_2),\widetilde{\Gamma}_2(\beta_1,\beta_2))\}\,,
\end{equation}
where the external field $\varphi$ has two minima, we consider the larger body, strictly containing the former one, given by
\begin{equation}\label{mainbody}
\Delta\,=\,\{(\beta_1,\beta_2,\gamma) \in (\R^+)^3\,:\,(\beta_1,\beta_2)\in \Omega_0\,,\,\gamma \in (\Gamma_1(\beta_1,\beta_2),\Gamma_2(\beta_1,\beta_2))\}\,,
\end{equation}
for which range of parameters a two--cut phase takes place.

In Theorems 3.2-3.3 above, the geometry of the problem has been mainly depicted in terms of the admissible values of the heights $(\beta_1,\beta_2)$ in order to guarantee the existence of a two-cut phase for a certain range of the other parameter, the charge $\gamma$. However, next a description of the three-dimension ``two--cut'' body $\Delta$ will be provided to get a better knowledge of the solution of the problem. As shown in Theorem 3.3, it is a body in the first (positive) octant of the $(\beta_1,\beta_2, \gamma)$--space whose projection in the $(\beta_1,\beta_2)$--plane is given by the $\Omega_0$ region in Figure 2. Theorems 3.2 also shows that the projection of both $\widetilde{\Delta}$ on the $(\beta_1,\beta_2)$--plane is given by the same region $\Omega_0$. On the largest body, the existence of a two-cut phase is provided; the unique difference between the phase diagram corresponding to $\widetilde{\Delta}$ and $\Delta\setminus \widetilde{\Delta}$ lies on the fact that for the range of parameters belonging to $\widetilde{\Delta}$, the ``life'' of the two-cut phase is longer.

In this section we restrict ourselves to show the main characteristics of these admissible bodies, especially of $\Delta$, defined in \eqref{mainbody}. This is a three-dimension body bounded by two surfaces (``top and lower covers'').

In this sense, some results will be presented (without proof) and some graphics will be displayed. First, the following properties holds for the three--dimension body $\Delta$
\begin{itemize}

\item The intersection of both surfaces for $(\beta_1,\beta_2)\in \mathcal{C}$ is given by the curve $$\gamma=-\frac{3\beta_1^2+3\beta_2^2-4}{2(3\beta_1^2-4)}+
    \frac{1}{2}\sqrt{\left(\frac{3\beta_1^2+3\beta_2^2-4}{(3\beta_1^2-4)}\right)^2-
    4\frac{3\beta_2^2-4}{3\beta_1^2-4}}$$

\item The intersection with the plane $\beta_2 = 0$ is the whole quadrant $(\R^+)^2$. The same occurs with respect to the plane $\beta_1 = 0$.

\item The following limits hold:
\begin{align*}
&\lim_{\beta_1\searrow 0}\Gamma_1=0\,,\qquad \lim_{\beta_1\searrow 0}\Gamma_2=+\infty\,,\\
&\lim_{\beta_2\searrow 0}\Gamma_1=0\,,\qquad \lim_{\beta_2\searrow 0}\Gamma_2=+\infty\,.
\end{align*}

\end{itemize}

In Figures \ref{fig:secbeta1}-\ref{fig:secdiag} different sections of $\Delta$ are shown.

\begin{figure}
    \begin{center}
        \includegraphics[scale=0.7]{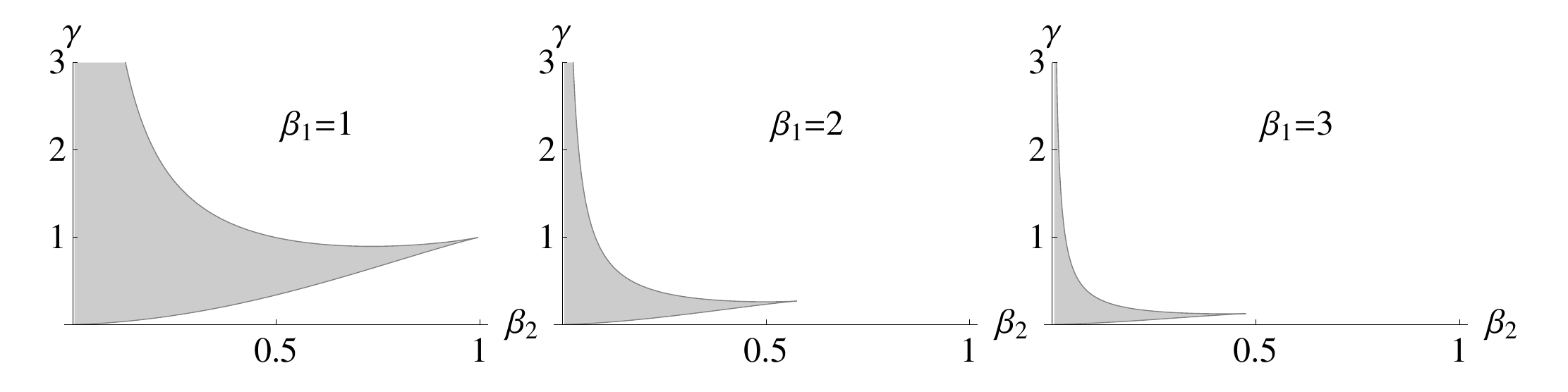}
    \end{center}
    \caption{Vertical sections of $\Delta$ for $\beta_1 = 1,2,3$. }
    \label{fig:secbeta1}
\end{figure}

\begin{figure}
    \begin{center}
        \includegraphics[scale=0.7]{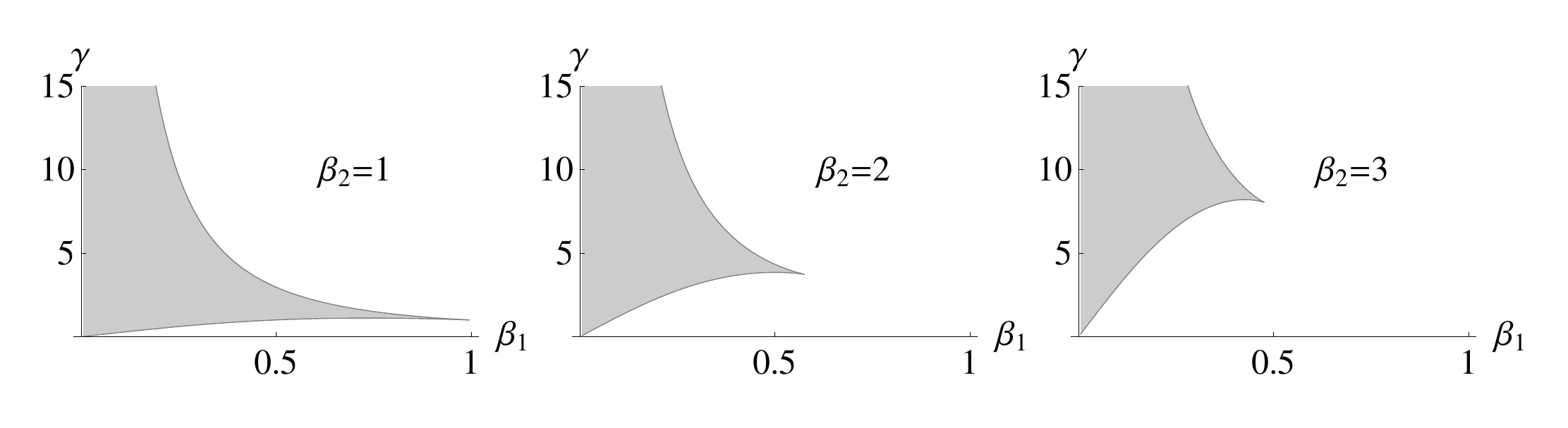}
    \end{center}
    \caption{Vertical sections of $\Delta$ for $\beta_2 = 1,2,3$. }
    \label{fig:secbeta2}
\end{figure}

\begin{figure}
    \begin{center}
        \includegraphics[scale=1.0]{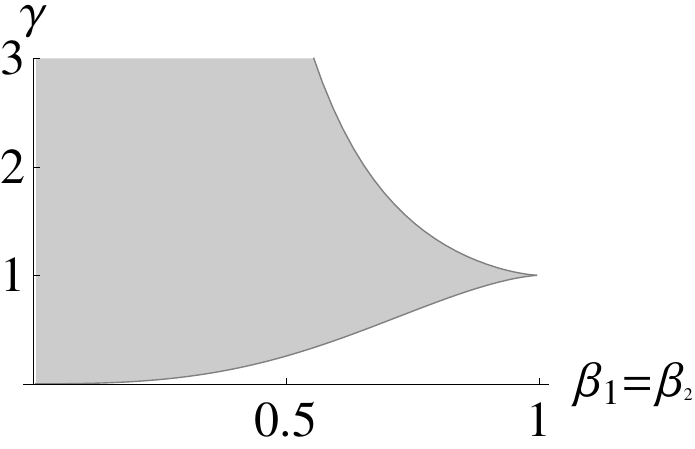}
    \end{center}
    \caption{Section of $\Delta$ for $\beta_1 = \beta_2$. }
    \label{fig:secdiag}
\end{figure}

Finally, with respect to the body $\widetilde{\Delta}$, in \eqref{body}, the following are its main features:
$$\lim_{\beta_2\searrow 0}\widetilde{\Gamma}_1 =0\,,\qquad \lim_{\beta_2\searrow 0}\widetilde{\Gamma}_2 = \frac{-1}{2}+\sqrt{\left(\frac{1}{2}\right)^2+\frac{1}{\beta_1^2}}\,.$$
$$\lim_{\beta_1\searrow 0}\widetilde{\Gamma}_1 =\frac{\beta_2^2}{2}+\frac{\beta_2\sqrt{4+\beta_2^2}}{2}\,,\qquad \lim_{\beta_1\searrow 0}\widetilde{\Gamma}_2=+\infty\,.$$

\newpage

\obeylines
\texttt{

Ram\'{o}n Orive,

Universidad de La Laguna, Canary Islands, Spain.

rorive@ull.es.

Research partially supported by Ministerio de Ciencia e Innovaci\'{o}n under grants MTM2011-28781 and MTM2015-71352.

\vspace{.5cm}

Joaqu\'{i}n F. S\'{a}nchez Lara,

Universidad de Granada, Spain.

jslara@ugr.es.

Research partially supported by Ministerio de Ciencia e Innovaci\'{o}n under grant MTM2015-71352 and by research project of Junta de Andaluc\'{\i}a under grant FQM384.
}

\end{document}